\documentclass{article}
\usepackage[T1]{fontenc}
\usepackage{amsthm, fullpage}
\usepackage{amsmath,enumerate}
\usepackage{amssymb}
\usepackage{amsfonts}
\usepackage{eucal}
\usepackage[all]{xy}
\usepackage{pslatex}
\usepackage{hyperref}
\providecommand{\scr}{\mathcal}
\usepackage{mathrsfs}

\newtheorem{prop}{Proposition}[section]

\newtheorem{coro}[prop]{Corollary}
\newtheorem{lemm}[prop]{Lemma}
\newtheorem{lem}[prop]{Lemma}
\newtheorem*{lemm*}{Lemma}

\theoremstyle{definition}

\newtheorem{empt}[prop]{}
\newtheorem{dfn}[prop]{Definition}
\newtheorem{rem}[prop]{Remark} 
\newtheorem{ntn}[prop]{Notation} 
\newtheorem{ex}[prop]{Examples} 
\newtheorem*{rem*}{Remark}

\theoremstyle{thm}
\newtheorem{thm}[prop]{Theorem}
\newtheorem*{thm*}{Theorem}
\newtheorem*{lem*}{Lemma}

\newtheorem*{cor*}{Corollary}
\newtheorem*{prop*}{Proposition}

\theoremstyle{dfn}
\newtheorem*{dfn*}{Definition}

\DeclareMathOperator{\coker}{Coker}
\DeclareMathOperator{\im}{Im}
\DeclareMathOperator{\coim}{Coim}
\DeclareMathOperator{\sym}{Sym}

\numberwithin{equation}{prop}

\newcommand{\riso}{ \overset{\sim}{\longrightarrow}\, }

\newcommand{\Spec}{\mathrm{Spec}\,}

\newcommand{\gr}{\mathrm{gr}}

\newcommand{\Ext}{\mathrm{Ext}}

\newcommand{\FF}{{\mathcal{F}}}

\newcommand{\E}{{\mathcal{E}}}
\newcommand{\G}{{\mathcal{G}}}
\renewcommand{\H}{{\mathcal{H}}}
\newcommand{\M}{{\mathcal{M}}}
\newcommand{\NN}{{\mathcal{N}}}
\newcommand{\D}{{\mathcal{D}}}

\renewcommand{\O}{{\mathcal{O}}}

\newcommand{\V}{\mathcal{V}}
\renewcommand{\S}{\mathcal{S}}

\newcommand{\ZZ}{\mathcal{Z}}
\newcommand{\X}{\mathfrak{X}}

\renewcommand{\L}{\mathbb{L}}
\newcommand{\R}{\mathbb{R}}
\newcommand{\Q}{\mathbb{Q}}
\newcommand{\Z}{\mathbb{Z}}
\newcommand{\N}{\mathbb{N}}

\newcommand{\hdag}{  \phantom{}{^{\dag} }    }

\begin{document}

\title{Lagrangianity for log extendable overconvergent  $F$-isocrystals}
\author{Daniel Caro} 
\date{}

\maketitle

\begin{abstract}
In the framework of Berthelot's theory of arithmetic $\D$-modules, 
we prove that Berthelot's characteristic variety associated with a holonomic $\D$-modules endowed with a Frobenius structure
has pure dimension. As an application, we get the lagrangianity of the characteristic variety of
a log extendable overconvergent  $F$-isocrystal.

\end{abstract}

\tableofcontents

\bigskip

\section*{Introduction}
Let $\V$ be a complete discrete valued ring of mixed characteristic $(0,p)$, $\pi$ be a uniformizer, 
$K$ its field of fractions,  
$k$ its residue field which is supposed to be perfect. 
Let $\X$ be a smooth formal $\V$-scheme (the topology is the $p$-adic one),
$X$ be its special fiber. Berthelot has built the sheaf differential operator with finite level over $\X$
which he denotes by $\D ^\dag _{\X}$ (this corresponds somehow to the weak $p$-adic completion of the usual 
sheaf of differential operators). 
Putting $\D ^\dag _{\X,\Q } := \D ^\dag _{\X } \otimes _{\Z} \Q $,
he has defined in \cite[5.2.7]{Beintro2} the characteristic variety $\mathrm{Car} (\E)$ (included in the cotangent space of $X$) 
associated with 
a coherent $\D ^\dag _{\X, \Q }  $-module $\E$ endowed with a Frobenius structure.
He has proved Bernstein's inequality $\dim \mathrm{Car} (\E) \geq \dim X$ and has defined 
$\E$ to be holonomic when this inequality is an equality. 
In this paper, we first prove that when $\E$ is holonomic, 
its characteristic variety $\mathrm{Car} (\E) $ is of pure dimension $\dim X$. 
One main ingredient of the proof is to use the homological characterization of the holonomicity (see \cite[III.4.2]{virrion})
and another one is to use the sheaf of microdifferential operators (for instance, see \cite{Abe-microdiff}).
Both ideas comes from  the original proof of Kashiwara of the analogous property 
in the theory of analytic $\D$-modules (see \cite{Kashiwara-B-function-Hol}).
Finally, when $\E$ is a log extendable overconvergent $F$-isocrystal, 
we establish the inclusion of $\mathrm{Car} (\E) $ into a explicit lagrangian subvariety of the cotangent space of $X$.
With the above purity theorem, this inclusion implies the Lagrangianity of $\mathrm{Car} (\E) $.
Moreover, one another application of this inclusion in a further work will be to get 
some ``relative generic $\O$-coherence'' 
(see precisely the proof of Theorem \cite[1.4.3]{Caro-BettiNumbers}).
This will imply some Betti number estimates (see \cite{Caro-BettiNumbers}).
In the theory of arithmetic $\D$-modules, we recall that  to check some property 
we are often able to reduce to the case of log extendable overconvergent $F$-isocrystals 
(e.g. in the proof of Theorem \cite[1.4.3]{Caro-BettiNumbers}). 
Indeed, overholonomic $F$-complexes of arithmetic $\D$-modules are devissable in overconvergent $F$-isocrystals 
(see \cite{caro_devissge_surcoh})
and thanks to Kedlaya's semistable reduction theorem
any overconvergent $F$-isocrystal becomes log-extendable after the pull-back by some generically etale alteration 
(see \cite{kedlaya-semistableIV}).

\section*{Convention, notation of the paper}
Let $\V$ be a complete discrete valued ring of mixed characteristic $(0,p)$, $\pi$ be a uniformizer, 
$K$ its field of fractions,  
$k$ its residue field which is supposed to be perfect. 
A $k$-variety is a separated reduced scheme of finite type over $k$.
We will denote formal schemes by curly or gothic letters and the corresponding straight roman letter will
mean the special fiber (e.g. if $\X$ is a formal scheme over $\V$, then $X$ is the $k$-variety equal to the special fiber of $\X$).
When $M$ is a $\V$-module, we denote by 
$\widehat{M}$ its $\pi$-adic completion and we set
$M _\Q := M \otimes _{\V} K$.
By default, a module will mean a left module. 

\section{Convention and preliminaries on filtered modules}

We use here the terminology of Laumon in \cite[A.1]{Laumon-TransfCanon}:
\begin{enumerate}[(i)]
\item A filtered ring 
$( D , D _i)$ is a ring $D$, unitary, non necessary commutative, with an increasing filtration by additive subgroups $(D _i) _{i\in \Z}$
indexed by $\Z$ such that $1 \in D _0$ and $D _i \cdot D _j \subset D _{i+j}$ for any $i,j\in \Z$.

\item Let $( D , D _i)$ be a filtered ring. 
We get an exact (not abelian) category of filtered $( D , D _i)$-modules as follows. 
A filtered  $( D , D _i)$-module $( M , M _i)$ is a $D$-module $M$
endowed with a filtration $(M _i) _{i\in \Z}$ such that 
$A _i \cdot M _j \subset M _{i+j}$. A morphism of 
$( M , M _i)\to ( M ', M '_i)$ of filtered $( D , D _i)$-modules is 
a morphism of $D$-modules $f \colon M \to M'$ such that
$f (M _i ) \subset M' _i$. 
If $( M , M _i)$ is a filtered $( D , D _i)$-module and $n \in \Z$, 
we denote by 
$(M (n), M (n) _{i} )$ the filtered $( D , D _i)$-module defined as follows: 
$M (n) = M$ and $M (n) _{i} := M_{i+n}$.
Following \cite[2.1.2]{EGAII},
a filt free $( D , D _i)$-module (resp. of finite type) is 
a direct sum (resp. a finite direct sum) in the category of filtered $( D , D _i)$-modules
of the form 
$( D ( n ) , D ( n) _i)$, for some integer 
$n$. 
Let $( M , M _i)$ be a filtered $( D , D _i)$-module. 
We say that the filtration $M _{i}$ is good or that
$( M , M _i)$ is a good a filtered $( D , D _i)$-module
if there exists an epimorphism 
in the category of filtered $( D , D _i)$-modules
of the form
$\phi \colon ( L, L _i) \twoheadrightarrow ( M , M _i)$
such that 
$\phi ( L _i) = M _i$.
We remark that any $D$-module of finite type can be endowed with a 
good filtration. Conversely, for any good filtered $( D , D _i)$-module 
$( M , M _i)$, $M$ is a $D$-module of finite type. 

\item 
Let $( D , D _i)$ be a filtered ring and $( M , M _i)$ be a filtered $( D , D _i)$-module. 
The  {\it ind-pro-complete separation} of $( M , M _i)$, denoted by $(\widehat{M}, \widehat{M} _i)$ is a filtered $( D , D _i)$-module
defined as follows:
$ \widehat{M} _i := \underleftarrow{\lim} _{n} M _i /M _{i-n}$ is the complete separation of $M _i$ with respect to the 
filtration $(M _{i-n}) _{n\in\N}$ and 
$ \widehat{M} := \cup _{i\in \Z} \widehat{M} _i$, where
the inclusion $\widehat{M} _i \subset \widehat{M} _{i+1}$ are that induced by complete separation from
the inclusion $M _i \subset M _{i+1}$. 
Using the universal property of projective limits we check that 
$(\widehat{D}, \widehat{D} _i)$ is also a filtered ring and 
that $(\widehat{M}, \widehat{M} _i)$ is a filtered 
$(\widehat{D}, \widehat{D} _i)$-module.

We say that $( M , M _i)$ is {\it ind-pro-complete separated} if 
the canonical morphism 
$( M , M _i) \to (\widehat{M}, \widehat{M} _i)$ is an isomorphism.
For instance, we remark that the filtration of an ind-pro-complete separated filtered ring is exhaustive. 
As Laumon, to simplify the terminology (we hope there will not be confusing with the usual notion of completion),
we will simply say ``complete'' for ``ind-pro-complete separated''
and ``completion'' for `ìnd-pro-complete separation''. 

\item In this section, with our abuse of terminology, 
$( D , D _i)$ will be a complete filtered ring such that $\gr ( D , D _i)$ is a left and right noetherian ring. 
Hence, from Proposition \cite[A.1.1]{Laumon-TransfCanon}, a good filtered $(D, D  _i)$-module is complete.
\end{enumerate}

Lemma \cite[3.3.2]{Laumon-CatFiltDmod} 
is still valid in the following context:
\begin{lem}
\label{Laumon332}
Let $0 \to (M', M' _i) \overset{f}{\longrightarrow} (M, M _i)
\overset{g}{\longrightarrow} (M'', M'' _i) \to 0$
be a sequence of morphisms of good filtered $(D, D  _i)$-modules.
The following conditions are equivalente :
\begin{enumerate}[(a)]
\item we have $M' _i= M' \cap M _i$ and $M'' _i = g (M' _i)$ for any $i\in \Z$; 
\item the sequences of abelian groups 
$0 \to M '_i \overset{f}{\longrightarrow} M _i\overset{g}{\longrightarrow} M''  _i\to 0$ are exact for any $i\in \Z$;
\item $g \circ f = 0$ and the sequence of $\gr D$-modules 
$0 \to \gr M' \to \gr M \to \gr M'' \to 0$ is exact. 
\end{enumerate}
When these equivalent conditions are satisfied, the sequence
of $D$-modules 
$0 \to M' \to M \to M''\to 0$ is exact.

\end{lem}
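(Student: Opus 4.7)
The plan is to prove the three equivalences in the natural cycle $(b) \Rightarrow (a) \Rightarrow (c) \Rightarrow (b)$, and to obtain the final exactness assertion as a by-product of $(b)$. Since the filtered modules involved are all good, Laumon's Proposition A.1.1, recalled in (iv) of the preliminaries, guarantees that they are complete; in particular each filtration is exhaustive, so taking the union over $i$ of the short exact sequences in $(b)$ produces the exact sequence of $D$-modules $0 \to M' \to M \to M'' \to 0$ at once, from which the strictness conditions in $(a)$ can be read off (the kernel of $g|_{M_i}$ is $\ker(g) \cap M_i = M' \cap M_i$, and $g(M_i) = M''_i$ directly). The implication $(a) \Rightarrow (c)$ (the identity $g \circ f = 0$ being implicit in the notion of a sequence) is then formal: from $(a)$ at indices $i-1$ and $i$ one checks directly that $\gr f$ is injective, $\gr g$ is surjective, and that the image of $\gr f$ equals the kernel of $\gr g$.

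The substantive direction is $(c) \Rightarrow (b)$. Fix $i \in \Z$ and prove by induction on $n \geq 0$ that the sequence of abelian groups
\[
0 \to M'_i/M'_{i-n} \to M_i/M_{i-n} \to M''_i/M''_{i-n} \to 0
\]
is exact. The case $n=0$ is trivial; for the inductive step, apply the $3 \times 3$ lemma to the diagram whose three columns are the canonical short exact sequences of successive quotients of $M'$, $M$, and $M''$, whose top row is the exact graded piece at index $i-n$ given by $(c)$, and whose bottom row is the induction hypothesis at $n$. Having established exactness at every finite quotient, pass to the projective limit in $n$: by completeness one has $M'_i = \underleftarrow{\lim}_n M'_i/M'_{i-n}$ (and similarly for $M_i$ and $M''_i$), and the transition maps in each inverse system are surjective, so the Mittag-Leffler condition is satisfied and the limit of our exact sequences remains exact. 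This gives $(b)$.

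The final assertion of the lemma is immediate from $(b)$ by the same union-over-$i$ argument. The main obstacle is the implication $(c) \Rightarrow (b)$: one must lift exactness from the graded pieces up through every finite quotient of the filtration by a homological induction, and only then can completeness (via Mittag-Leffler) be invoked to reach the filtration itself. The other implications are essentially formal once module-level exactness is available.
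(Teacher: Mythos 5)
Your proof is correct and follows essentially the same route as the paper's: the equivalence of (a) and (b) and the implication to (c) are treated as formal, and the substantive direction $(c)\Rightarrow(b)$ is established exactly as in the paper, by induction on $n$ via the nine ($3\times 3$) lemma on the finite quotients $M_i/M_{i-n}$, followed by passage to the projective limit using completeness of the good filtrations and the Mittag--Leffler condition. The only (immaterial) difference is that you route $(b)\Rightarrow(c)$ through $(a)$ with a direct check on graded pieces, where the paper applies the nine lemma once more.
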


\begin{proof}
The equivalence between $(a)$ and $(b)$ is obvious. 
Suppose the condition $(b)$ is satisfied. 
The fact  good filtrations are exhaustive implies that $g \circ f =0$. 
We get the last condition $(c)$ by using the nine Lemma 
(see the exercice \cite[1.3.2]{Weibel-HomologicalAlg}).
Suppose now the condition $(c)$ is satisfied. 
Using the nine Lemma (more precisely, the part 3 of the exercice \cite[1.3.2]{Weibel-HomologicalAlg}), 
for any $i\in \Z$, we check by induction in $n\geq 1$ that
the sequence of abelian groups 
$0 \to M' _i /M' _{i-n} \to M _i /M _{i-n}\to M'' _i /M'' _{i-n}\to0$ are exact. 
Taking the projective limits, since $M' _i$, $M _i$ and $M'' _i$ are complete separated by hypothesis, 
since Mittag Leffler condition is satisfied, we get the sequence
$0 \to M '_i \overset{f}{\longrightarrow} M _i\overset{g}{\longrightarrow} M''  _i\to 0$ is exact. 
The last statement of the Lemma follows from the remark that filtrations are exhaustive. 
\end{proof}

\begin{dfn}
Let $0 \to (M', M' _i) \overset{f}{\longrightarrow} (M, M _i)
\overset{g}{\longrightarrow} (M'', M'' _i) \to 0$
be a sequence of morphisms of good filtered $(D, D  _i)$-modules
satisfying conditions (a) and (b) of \ref{Laumon332}.
We say that this sequence is an ``exact'' sequence of morphisms of good filtered $(D, D  _i)$-modules.
\end{dfn}

\begin{lem}
\label{lemexseq-imu-coimu}
Let 
$u\colon (M, M _i) \to (N, N _i)$ be a morphism of good filtered $(D, D  _i)$-modules. 
We have the exact sequences of good filtered $(D, D  _i)$-modules:
\begin{gather}
\notag
0 \to \ker u \to (M, M _i) \to \mathrm{Coim} u \to 0, 
\\
\label{exseq-imu-coimu}
0 \to \mathrm{Im} u \to (N, N _i) \to  \mathrm{Coker} u \to 0.
\end{gather}
\end{lem}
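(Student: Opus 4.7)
The plan is to endow each of $\ker u$, $\mathrm{Im}\, u$, $\mathrm{Coim}\, u$, and $\mathrm{Coker}\, u$ with their natural induced (resp. quotient) filtrations and then verify condition (a) of Lemma \ref{Laumon332} for each of the two sequences. Specifically, I would set $(\ker u)_i := \ker u \cap M_i$ and $(\mathrm{Im}\, u)_i := \mathrm{Im}\, u \cap N_i$, while $(\mathrm{Coim}\, u)_i$ and $(\mathrm{Coker}\, u)_i$ are taken to be the images of $M_i$ and $N_i$ in $M/\ker u$ and $N/\mathrm{Im}\, u$ respectively. With these choices, condition (a) is automatic in both sequences; the entire content of the lemma therefore reduces to showing that the four filtrations so defined are \emph{good}.

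For $\mathrm{Coim}\, u$ and $\mathrm{Coker}\, u$ this is immediate: if $\phi\colon (L, L_\bullet) \twoheadrightarrow (M, M_\bullet)$ is a good epimorphism witnessing the goodness of $(M, M_i)$, then the composition with the canonical surjection $(M, M_\bullet) \twoheadrightarrow \mathrm{Coim}\, u$ is again a good epimorphism by construction (and similarly for $N \twoheadrightarrow \mathrm{Coker}\, u$). The serious point is the goodness of the induced subspace filtrations on $\ker u \subset M$ and on $\mathrm{Im}\, u \subset N$, which is the filtered analogue of the Artin--Rees lemma.

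The hard part will be handling this Artin--Rees step. My approach is to use the Rees construction: form the graded ring $\widetilde{D} := \bigoplus_{i \in \Z} D_i\, t^i$ and the graded module $\widetilde{M} := \bigoplus_{i \in \Z} M_i\, t^i$. Because $\gr D$ is left and right noetherian and the filtration on $D$ is exhaustive (recall the last convention preceding the lemma), $\widetilde{D}$ is noetherian, and since $(M, M_i)$ is good, $\widetilde{M}$ is finitely generated over $\widetilde{D}$. The subobject $\bigoplus_i (\ker u \cap M_i)\, t^i \subset \widetilde{M}$ is then finitely generated, which translates back into the statement that the induced filtration on $\ker u$ is good; the same argument with $\mathrm{Im}\, u \subset N$ handles the second sequence. (One may also cite directly the corresponding statement in Laumon \cite{Laumon-CatFiltDmod}.)

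Once goodness is secured for all four filtered modules, condition (a) of Lemma \ref{Laumon332} is visibly satisfied in both sequences, and the conclusion of \emph{loc. cit.} then gives the exactness of the corresponding sequences of $D$-modules, as required.
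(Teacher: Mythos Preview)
Your proposal is correct and follows essentially the same route as the paper: define the natural induced/quotient filtrations on $\ker u$, $\coim u$, $\im u$, $\coker u$, check that all four are good, and then observe that condition (a) of Lemma~\ref{Laumon332} is satisfied by construction. The paper simply cites \cite[A.1.1.2]{Laumon-TransfCanon} for the goodness of the induced filtration on a submodule (whence $\ker u$ and $\im u$), whereas you sketch the Rees-ring argument behind that citation; otherwise the proofs coincide. One small remark: the noetherianity of $\widetilde{D}=\bigoplus_i D_i\,t^i$ genuinely uses the \emph{completeness} hypothesis on $(D,D_i)$ (this is the Zariskian condition), not merely exhaustiveness plus noetherianity of $\gr D$ --- but completeness is indeed part of the standing conventions you invoke, so the argument goes through.
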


\begin{proof}
Let $\ker u$ be the kernel of $u$ in the category of filtered $(D, D  _i)$-modules, 
i.e. $\ker u = (\ker u, \ker u \cap M _i)$.
Let $\coker u$ be the cokernel of $u$ in the category of filtered $(D, D  _i)$-modules, 
i.e. $\coker u = (\coker u, N _i/N _i \cap u (M))$ where $\phi \colon N \to \coker u$ is the canonical morphism.
From \cite[A.1.1.2]{Laumon-TransfCanon}, the filtered $(D, D  _i)$-modules $\ker u$
and
$ \mathrm{Coker} u $ are good.
Hence, $\ker u, \coker u, \im u, \coim u$ exist in the category of good filtered $(D, D  _i)$-modules
(and are equal to that computed in the category of  good filtered $(D, D  _i)$-modules).
Hence, both sequences 
\ref{exseq-imu-coimu} are well defined in the category of good filtered $(D, D  _i)$-modules. 
Since $\coim u = (\im u, u (M _i))$ and $\im u = (\im u, \im u \cap N _i)$,
then these sequence satisfy the condition (a) of Lemma \ref{Laumon332}
and hence they are exact.

\end{proof}

\begin{dfn}
[Strictness]
\label{strictness}
A morphism 
$u\colon (M, M _i) \to (N, N _i)$ of good filtered $(D, D  _i)$-modules is strict if 
the canonical morphism 
$\mathrm{Coim} u \to \mathrm{Im} u$ is an isomorphism of (good) filtered $(D, D  _i)$-modules.
If  $u\colon M \to N$ is a monomorphism (resp. epimorphism) and if $u\colon (M, M _i) \to (N, N _i)$ is strict, 
we say that $u $ is a strict monomorphism (resp. strict epimorphism). 
\end{dfn}

\begin{lem}
\label{lem-gru-injetc}
Let 
$u\colon (M, M _i) \to (N, N _i)$ be a morphism of good filtered $(D, D  _i)$-modules. 

\begin{enumerate}
\item Then $u$ is strict if and only if $u (M _i) = u (M) \cap N _i$ for any $i \in \Z$. 
\item The following conditions are equivalent
\begin{enumerate}
\item $u$ is a strict monomorphism 
\item  the morphism 
$(M, M _i) \to \mathrm{Im} u$ is an isomorphism
\item 
the sequence of good filtered $(D, D  _i)$-modules
$0 \to (M, M _i) \to (N, N _i) \to  \mathrm{Coker} u \to 0$
is exact. 
\item $\gr u$ is a monomorphism. 
\end{enumerate}

\item The following conditions are equivalent

\begin{enumerate}
\item $u$ is a strict epimorphism
\item  the morphism 
$ \coim u \to (N, N _i)$ is an isomorphism
\item the 
sequence of good filtered $(D, D  _i)$-modules
$0 \to \ker u \to (M, M _i) \to  (N, N _i) \to 0$
is exact. 
\item $\gr u$ is an epimorphism. 
\end{enumerate}

\item $u$ is an isomorphism if and only if $\gr u$ is an isomorphism.
\end{enumerate}
\end{lem}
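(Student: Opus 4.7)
The plan is to argue the four parts in order, drawing systematically on Lemmas \ref{Laumon332} and \ref{lemexseq-imu-coimu}, and on the fact (noted in point (iv) of the conventions, via \cite[A.1.1]{Laumon-TransfCanon}) that good filtrations are complete and in particular separated and exhaustive.

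For part (1), I would unfold the definition of $\coim u$ and $\im u$ from the proof of Lemma \ref{lemexseq-imu-coimu}: as filtered $D$-modules one has $\coim u = (\im u, u(M_i))$ and $\im u = (\im u, \im u \cap N_i)$. The canonical morphism $\coim u \to \im u$ is the identity on underlying modules, so it is a filtered isomorphism exactly when the two filtrations coincide, i.e.\ $u(M_i) = \im u \cap N_i = u(M) \cap N_i$.

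For part (2), the equivalence (a)$\Leftrightarrow$(b) is immediate from the description above: when $u$ is injective $\coim u$ is just $(M, M_i)$ re-indexed by $u(M_i)$, and strictness then turns the iso $\coim u \simeq \im u$ into $(M,M_i) \simeq \im u$. The equivalence (b)$\Leftrightarrow$(c) is obtained by inserting $(M, M_i) \simeq \im u$ into the canonical short exact sequence $0 \to \im u \to (N, N_i) \to \coker u \to 0$ of \ref{exseq-imu-coimu}, and conversely by reading off $\im u$ as the kernel of $N \to \coker u$. The equivalence (c)$\Leftrightarrow$(d) is then a direct application of Lemma \ref{Laumon332}, since the graded exactness statement (c) of that lemma for the sequence $0 \to M \to N \to \coker u \to 0$ is precisely that $\gr u$ is a monomorphism together with $\gr(N/\im u) = \gr \coker u$. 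The delicate point — and what I expect to be the main obstacle — is the implication (d)$\Rightarrow$(a): one must first deduce from the fact that $M_i$ is separated (a consequence of $(M,M_i)$ being good, hence complete) that an element $m \neq 0$, taken in its smallest filtration step $M_j \setminus M_{j-1}$, has nonzero symbol $[m] \in \gr_j M$, so $\gr u$ injective forces $u(m) \neq 0$; then for strictness, given $n = u(m) \in u(M) \cap N_i$, pick the minimal $j$ with $m \in M_j$ and observe that if $j > i$ then $[u(m)] = 0$ in $N_j/N_{j-1}$ while $[m] \neq 0$, contradicting injectivity of $\gr u$, so $m \in M_i$.

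Part (3) is the formal dual of part (2), with the roles of $\ker$ and $\coker$, and of $\coim$ and $\im$, exchanged; the same four-way equivalence is obtained by mirroring each of the above arguments using instead the exact sequence $0 \to \ker u \to (M, M_i) \to \coim u \to 0$ of \ref{exseq-imu-coimu}. Finally, part (4) follows by combining (2) and (3): if $\gr u$ is an isomorphism then it is both mono and epi, so $u$ is simultaneously a strict monomorphism and a strict epimorphism, which forces $\ker u = 0$, $\coker u = 0$, and $u$ to be a filtered isomorphism; the converse is trivial.
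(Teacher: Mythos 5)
Your treatment of parts (1), (2) and (4) matches the paper's proof: part (1) by unfolding the filtrations on $\mathrm{Coim}\,u$ and $\mathrm{Im}\,u$, the chain (a)$\Rightarrow$(b)$\Rightarrow$(c)$\Rightarrow$(d) via Lemmas \ref{lemexseq-imu-coimu} and \ref{Laumon332}, and (d)$\Rightarrow$(a) in part (2) by the separatedness of the filtration (your ``minimal $j$'' argument is just a repackaging of the paper's induction $M_{i+n}\cap N_i=M_i$). Part (4) as the conjunction of (2) and (3) is also the paper's route.

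There is, however, a genuine gap in part (3). The implication (d)$\Rightarrow$(a) there --- $\gr u$ surjective implies $u(M_i)=N_i$ for all $i$ --- is \emph{not} obtained by ``mirroring'' the argument of part (2). The injectivity step in part (2) rests on separatedness of the filtration and has no formal dual giving surjectivity; indeed the implication is false for filtered modules that are merely separated (the inclusion of $k[t]$ into $k[[t]]$ with the $t$-adic filtrations induces an isomorphism on associated graded without being surjective). What is actually needed, and what the paper supplies, is a successive-approximation argument using the \emph{completeness} of good filtrations: given $y\in N_i$, surjectivity of $\gr u$ in each degree lets one choose $x_n\in M_{i-n}$ with $y-u(x_0+\dots+x_n)\in N_{i-n-1}$, and one then invokes the completeness of $M_i$ for the filtration $(M_{i-n})_{n\in\N}$ to sum the series $\sum_n x_n$ to an element $x\in M_i$ with $u(x)=y$. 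You mention completeness in your preamble but never use it; without this convergence step the proof of part (3), and hence of the ``only if'' direction of part (4), is incomplete.
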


\begin{proof}
The first statement is straighforward from the description of $\im u$ and $\coim u$.
and from Lemma \ref{lemexseq-imu-coimu}. Let us check 2. 
$(a) \Rightarrow (b)$ is clear from the description of $\im u$ and from $1$. 
The implication $(b) \Rightarrow (c)$ (resp. $(c) \Rightarrow (d)$) is a consequence of \ref{exseq-imu-coimu}.
(resp. \ref{Laumon332}). Finally, suppose $(d)$ is satisfied. Let $x \in \ker u$.
Suppose $x \not = 0$.  
There exists $i \in \Z$ such that $x \not \in M _i$ (recall the filtration is separated).
This is a contradiction with the fact that  
$M _{i+1}/M _i \to N _{i+1}/N _i$ is injective (because $\gr u$ is injective by hypothesis). 
Hence $u $ is a monomorphism.
The fact that $\gr u$ is injective implies that $ M _{i+1}\cap N _i = M _i$, for any $i \in \Z$. By induction in $n \in \N$ we get that 
for any $i \in \Z$, $ M _{i+n}\cap N _i = M _i$. Since the filtration is exhaustive, 
this yields that $M \cap N _i = M _i$. From part 1) of the Lemma, this means that
$u$ is strict. 
Let us check part 3). We check similarly
$(a) \Rightarrow (b) \Rightarrow (c) \Rightarrow (d)$. 
Now suppose that $\gr u$ is an epimorphism. 
Let $i \in \Z$. From part 1), it is sufficient to check 
$u (M _i) = N _i$ (indeed filtrations are exhausted and then $u$ will be surjective).
Let $y \in N _i$.
Put $y _{-1}:= y$.  
By induction in $n \geq 0$, we construct 
$y _n \in N _{i-1-n}$ and $x _n \in M _{i-n}$ such 
that $u (x _n ) = y _{n-1} - y _n$.
This is consequence of the equality 
$u (M _{i -n }) + N _{i -n-1} =N _{i -n }$ (because $\gr u$ is an epimorphism).
Since $N _i$ and $M _i$ are separated complete for the filtrations
$(N _{i-n}) _{n\in \N}$ and $(M _{i-n}) _{n\in \N}$, 
the sum $\sum _{n \geq -1} (y _n - y _{n+1})$ converge to $y$ and $\sum _{n \in \N} x _{n}$ converge in $M _i$ to an element,
denoted by $x$. Hence, $u (x) = y$ and then $N _i \subset u ( M_i)$.  
Finally, $4$ is a consequence of the equivalence $(a) \Leftrightarrow (d)$ of $2$ and $3$. 
\end{proof}

\begin{rem}
\label{strictness-rem-comp}
With the notation \ref{strictness-comp}, 
this is not true in general that if $u$ and $v$ are strict then $v \circ u$ is also strict. 
However, using \ref{strictness-comp} and \ref{lemexseq-imu-coimu}, 
we remark that a morphism of good filtered $(D, D  _i)$-modules
is strict if and only if it is the composition (in the category of good filtered $(D, D  _i)$-modules)
of a strict epimorphism with a strict monomorphism.
This last characterization of strictness was Laumon's definition of strictness given in 
\cite[1.0.1]{Laumon-CatFiltDmod}.
\end{rem}

\begin{rem}
\label{rem-strict-epi-good}
Let $(M, M _i)$ be a filtered $(D, D  _i)$-module. 
From \ref{lem-gru-injetc}.1, we remark that 
$(M, M _i)$ is a good filtered $(D, D  _i)$-module if and only if there exists
a strict epimorphism of the form 
$u\colon (L, L _i) \twoheadrightarrow (M, M _i)$, where
$(L, L _i)$ is a free filtered $(D, D  _i)$-module of finite type.
\end{rem}

\begin{lem}
Let 
$u\colon (M, M _i) \to (N, N _i)$ be a morphism of good filtered $(D, D  _i)$-modules. 

\begin{enumerate}
\item The following assertions are equivalent :
	\begin{enumerate}
	\item The morphism $u$ is strict ; 
	\item The sequence 
	$0 \to \gr \ker u \to \gr (M, M _i) \to \gr (N, N _i) \to  \gr \mathrm{Coker} u \to 0$
	is exact. 
	\item $\ker \gr (u) = \gr \ker (u)$ and $\mathrm{coker} \,\gr (u) = \gr \, \mathrm{coker} (u)$.
	\end{enumerate}
\item If $u$ is strict then we have also 
$\mathrm{im} \,\gr (u) = \gr \, \mathrm{im} (u)$.
\end{enumerate}

\end{lem}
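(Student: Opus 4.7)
The plan is to analyze $u$ via its canonical factorization. Let $\pi \colon (M, M_i) \twoheadrightarrow \coim u$ and $\iota \colon \im u \hookrightarrow (N, N_i)$ denote the canonical strict epimorphism and strict monomorphism coming from the two exact sequences of Lemma \ref{lemexseq-imu-coimu}, and let $\bar{u} \colon \coim u \to \im u$ denote the induced map, so that $u = \iota \circ \bar{u} \circ \pi$ and $\bar{u}$ is an isomorphism on the underlying $D$-modules. By Definition \ref{strictness} combined with Lemma \ref{lem-gru-injetc}.4, $u$ is strict if and only if $\gr \bar{u}$ is an isomorphism, and this is the property we must connect to $(b)$ and $(c)$.

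Applying Lemma \ref{Laumon332} to the two exact sequences of Lemma \ref{lemexseq-imu-coimu} produces short exact sequences of $\gr D$-modules
\begin{gather*}
0 \to \gr \ker u \to \gr M \overset{\gr \pi}{\longrightarrow} \gr \coim u \to 0, \\
0 \to \gr \im u \overset{\gr \iota}{\longrightarrow} \gr N \to \gr \coker u \to 0.
\end{gather*}
Combined with the factorization $\gr u = \gr \iota \circ \gr \bar{u} \circ \gr \pi$ this yields the four identifications I will use repeatedly: $\gr \ker u = \ker \gr \pi$ inside $\gr M$; $\gr \coker u = \gr N / \gr \im u$; $\ker \gr \bar{u} = \gr \pi (\ker \gr u)$ (because $\gr \pi$ is surjective and $\gr \iota$ is injective); and $\im \gr u = \gr \iota (\im \gr \bar{u})$, in particular $\im \gr u \subseteq \gr \im u$ inside $\gr N$.

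The implication $(a) \Rightarrow (b)$ is then immediate, since strictness forces $\gr \coim u = \gr \im u$ and so the two displayed short exact sequences splice into the four-term sequence of $(b)$; and $(b) \Rightarrow (c)$ follows by reading off $\ker \gr u$ and $\coker \gr u$ from that four-term sequence. For $(c) \Rightarrow (a)$ it suffices to prove $\gr \bar{u}$ is bijective. Injectivity: $\ker \gr u = \gr \ker u = \ker \gr \pi$ so $\ker \gr \bar{u} = \gr \pi (\ker \gr u) = \gr \pi(\ker \gr \pi) = 0$. Surjectivity: the canonical map $\gr N / \im \gr u \twoheadrightarrow \gr N / \gr \im u$ (well-defined by $\im \gr u \subseteq \gr \im u$) is the canonical map $\coker \gr u \to \gr \coker u$, and the hypothesis $(c)$ forces it to be an isomorphism, hence $\im \gr u = \gr \im u$ inside $\gr N$; by injectivity of $\gr \iota$ this rewrites as $\im \gr \bar{u} = \gr \im u$, i.e. surjectivity of $\gr \bar{u}$. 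Part $(2)$ is the same observation used in the forward direction: when $u$ is strict, $\gr \bar{u}$ is iso, so $\im \gr u = \gr \iota(\im \gr \bar{u}) = \gr \iota(\gr \im u) = \gr \im u$.

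The only substantive work is the bookkeeping in the second paragraph, matching the various kernels, images and cokernels attached to $u$ (with induced filtrations) with those of $\gr u$ via the factorization through $\bar{u}$; once these identifications are in hand, each of the three implications is an elementary diagram chase, and I expect no other obstacle.
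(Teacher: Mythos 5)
Your proof is correct and follows essentially the same route as the paper's: both apply $\gr$ to the two exact sequences of Lemma \ref{lemexseq-imu-coimu}, reduce strictness to the statement that $\gr(\coim u \to \im u)$ is an isomorphism, and conclude via Lemma \ref{lem-gru-injetc}.4. Your version just spells out explicitly the kernel/image bookkeeping that the paper packages into an abstract remark about epi--mono factorizations in an abelian category.
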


\begin{proof}
By applying the functor
$\gr $ to the exact sequences \ref{exseq-imu-coimu}, we get that 
$(a) \Rightarrow (b)$.
Conversely, suppose $(b)$ satisfied. 
First, remark the following fact available in an abelian category $\mathfrak{A}$: 
let 
$\alpha \colon M _1 \to M _2$ 
(resp. $\alpha \colon M _2 \to M _3$, resp. $\alpha \colon M _3 \to M _4$) 
be a epimorphism (resp. a morphism, resp. a monomorphism) 
of $\mathfrak{A}$. Then if $\ker \alpha = \ker \gamma \circ \beta \circ \alpha$
then $\beta$ is a monomorphism. 
Moreover, if $\im \gamma = \im \gamma \circ \beta \circ \alpha$,
then $\beta$ is surjective. 
By applying the functor
$\gr $ to the exact sequences \ref{exseq-imu-coimu},
with this remark, the condition $(b)$ implies that 
the morphism 
$\gr \coim (u) \to \gr \im (u)$ is an isomorphism
of the abelian category of $\gr D$-modules. 
With Lemma \ref{lem-gru-injetc}.4, 
this implies that $\coim (u) \to \im (u) $ is an isomorphism.

The equivalence $(b) \Leftrightarrow (c)$ is straightforward.
We check the statement 2) by applying $\gr $ to the exact sequences \ref{exseq-imu-coimu}.
\end{proof}

\begin{lem}
\label{strictness-comp}
Let 
$u\colon (M, M _i) \to (N, N _i)$ 
and
$v\colon (N, N _i) \to ( O, O _i)$
be two morphisms of good filtered $(D, D  _i)$-modules. 

\begin{enumerate}
\item If $v$ is a strict monomorphism and $u$ is strict
then $v \circ u$ is strict. 
\item If $u$ is  a strict epimorphism and $v$ is strict
then $v \circ u$ is strict. 
\item If $v \circ u$ is strict epimorphism then $v$ is a strict epimorphism. 
\item If $v \circ u$ is strict monomorphism then $u$ is a strict monomorphism. 
\end{enumerate}
\end{lem}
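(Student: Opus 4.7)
The plan is to reduce each of the four statements to the two characterizations of strictness already established, namely Lemma \ref{lem-gru-injetc}.1, which says $u$ is strict iff $u(M_i)=u(M)\cap N_i$ for every $i\in\Z$, and Lemma \ref{lem-gru-injetc}.2--3, which characterise strict monomorphisms (resp.\ epimorphisms) via injectivity (resp.\ surjectivity) of $\gr u$.

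For part 1, I would compute $(v\circ u)(M_i)$ directly. Using strictness of $u$ I can write $u(M_i)=u(M)\cap N_i$; applying $v$ and using that $v$ is injective yields $v(u(M_i))=v(u(M))\cap v(N_i)$. Strictness of $v$ gives $v(N_i)=v(N)\cap O_i$, so the intersection simplifies to $v(u(M))\cap O_i=(v\circ u)(M)\cap O_i$. For part 2, the chain is even shorter: the strict epimorphism hypothesis gives $u(M_i)=N_i$, whence $(v\circ u)(M_i)=v(N_i)$, and strictness of $v$ rewrites this as $v(N)\cap O_i=(v\circ u)(M)\cap O_i$. In both cases Lemma \ref{lem-gru-injetc}.1 concludes.

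For parts 3 and 4, I would pass to the associated graded. Since $\gr$ is an additive functor on the category of filtered $(D,D_i)$-modules we have $\gr(v\circ u)=\gr v\circ \gr u$. If $v\circ u$ is a strict epimorphism, Lemma \ref{lem-gru-injetc}.3 says $\gr(v\circ u)$ is surjective, hence $\gr v$ is surjective, and the same lemma applied in the reverse direction yields that $v$ is a strict epimorphism. Dually, if $v\circ u$ is a strict monomorphism then $\gr u$ is injective and Lemma \ref{lem-gru-injetc}.2 makes $u$ a strict monomorphism.

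None of the steps should pose a real obstacle: the whole point of Lemma \ref{lem-gru-injetc} was to package strictness into two equivalent criteria (one set-theoretic, one graded) that are each suited to a different half of \ref{strictness-comp}. The only subtle point to keep in mind, already flagged in Remark \ref{strictness-rem-comp}, is that strictness by itself does not compose; one genuinely needs the extra mono/epi hypothesis on one of the two factors in parts 1 and 2, and this is precisely what makes the set-theoretic intersection arguments go through.
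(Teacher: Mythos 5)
Your proof is correct. For parts 1 and 2 you follow essentially the same set-theoretic computation as the paper, which proves part 1 exactly as you do: use strictness of $u$ to write $u(M_i)=u(M)\cap N_i$, use injectivity of $v$ to distribute $v$ over the intersection, then use strictness of $v$ to replace $v(N_i)$ by $v(N)\cap O_i$. For parts 3 and 4 you take a slightly different route: the paper announces that everything can be checked from the criterion of Lemma \ref{lem-gru-injetc}.1 and, for part 4, runs the chain of inclusions $M_i\subset M\cap u^{-1}(N_i)\subset M\cap (v\circ u)^{-1}(O_i)=M_i$, whereas you pass to the associated graded, use the functoriality $\gr(v\circ u)=\gr v\circ \gr u$, and invoke the equivalences $(a)\Leftrightarrow(d)$ of Lemma \ref{lem-gru-injetc}.2--3. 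Both routes are legitimate, since those equivalences were fully established; your graded argument is arguably the cleaner one for 3 and 4, because surjectivity (resp.\ injectivity) of a composite passes trivially to the outer (resp.\ inner) factor, while the set-theoretic version has to track preimages of the filtration. You also correctly flag, in line with Remark \ref{strictness-rem-comp}, why the extra mono/epi hypothesis is genuinely needed in parts 1 and 2. Nothing is missing.
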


\begin{proof}
This can be checked elementarily from 
the characterization \ref{lem-gru-injetc}.1 
For instance, let us check 1.
Suppose $v$ is a strict monomorphism and $u$ is strict.
We have  
$v ( u ( M )) \cap O _i 
\subset 
v (N) \cap O _i 
= v (N _i)$ 
(because $v$ is strict).
Hence, we get 
$v ( u ( M )) \cap O _i 
\subset
v ( u ( M )) \cap v ( N _i) 
=
v ( u ( M ) \cap  N _i) 
=
v ( u ( M _i)) $.
This implies $v ( u ( M _i))
=v ( u ( M )) \cap O _i $
(use that $v$ is a monomorphism and $u$ is strict for the equalities).
Let us check 4. 
If $v \circ u$ is strict monomorphism then $u$ is a monomorphism
and we have
$M _i  \subset M \cap u ^{-1} (N _i) \subset M \cap (v \circ u ) ^{-1} ( O _i) = M _i$. 
Hence, $M _i = M \cap u ^{-1} (N _i)$, i.e. $u (M _i) = u (M) \cap N _i$.
We leave the other statements to the reader. 
\end{proof}

\begin{prop}
\label{exact-category}
With this definition of strictness, 
the category of good filtered $(D, D  _i)$-modules is exact (see the definition in \cite[1.0.2]{Laumon-CatFiltDmod}).
\end{prop}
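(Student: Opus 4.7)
The plan is to verify the axioms defining an exact category in the sense of \cite[1.0.2]{Laumon-CatFiltDmod}. I take the admissible monomorphisms (resp.\ epimorphisms) to be the strict monomorphisms (resp.\ epimorphisms) of Definition \ref{strictness}, and the admissible short exact sequences to be those defined after Lemma \ref{Laumon332}. Most of the substantive work has already been done in the preceding lemmas, so the proof is largely a matter of packaging them.

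First I would establish the additive structure: finite direct sums of good filtered $(D, D_i)$-modules endowed with the componentwise filtration remain good filtered (take the direct sum of two presentations as in Remark \ref{rem-strict-epi-good}), and the zero object, biproduct identifications, and additive structure on morphism sets are inherited from the category of $D$-modules.

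Next, the axioms on admissible morphisms are essentially already contained in the lemmas above: isomorphisms are simultaneously strict monomorphisms and strict epimorphisms by Lemma \ref{lem-gru-injetc}.4; the composition of strict monomorphisms (resp.\ strict epimorphisms) remains a strict monomorphism (resp.\ strict epimorphism) by parts 1 and 2 of Lemma \ref{strictness-comp}; and the ``obscure axioms'' asserting that if $v \circ u$ is a strict epimorphism (resp.\ monomorphism) then $v$ (resp.\ $u$) is again one, are exactly parts 3 and 4 of Lemma \ref{strictness-comp}. The characterization of ``exact'' sequences in Lemma \ref{Laumon332}, combined with Lemma \ref{lem-gru-injetc}, guarantees that a sequence is admissible if and only if it is the cokernel diagram of an admissible monomorphism (equivalently, the kernel diagram of an admissible epimorphism).

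The main obstacle is the pullback/pushout axiom: the pullback of a strict epimorphism along an arbitrary morphism must exist in the category and again be a strict epimorphism, and dually for strict monomorphisms. For a strict epimorphism $p\colon (M, M_i) \twoheadrightarrow (M'', M''_i)$ and a morphism $f\colon (N, N_i)\to (M'', M''_i)$, I would realize the pullback as the kernel $P$ of $(p, -f)\colon (M, M_i) \oplus (N, N_i) \to (M'', M''_i)$ computed inside the ambient category of filtered $(D, D_i)$-modules, with the induced filtration. That this filtration is again good is the content of \cite[A.1.1.2]{Laumon-TransfCanon} applied to a morphism between good filtered modules. The explicit description $P_i = \{(m,n)\in M_i \oplus N_i : p(m) = f(n)\}$, together with the strictness $p(M_i) = M''_i$ coming from Lemma \ref{lem-gru-injetc}.1, shows that the projection $P \to N$ is surjective on every filtration step, hence is a strict epimorphism by the same lemma. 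The pushout case is strictly dual, realized as the cokernel of an analogous morphism, with goodness again from \cite[A.1.1.2]{Laumon-TransfCanon} and strictness of the resulting monomorphism checked through Lemma \ref{lem-gru-injetc}.2, if desired after reducing to the graded-module statement via Lemma \ref{lem-gru-injetc}.4. I expect this bookkeeping on pullbacks and pushouts to be where the proof spends almost all of its effort, since every other axiom is essentially ready-made from the previous results.
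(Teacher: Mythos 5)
Your proof is correct and follows the same route as the paper, which simply declares the verification of Laumon's axioms ``straightforward from the previous Lemmas'' and only points out that the obscure axioms are parts 3 and 4 of Lemma \ref{strictness-comp}. The one axiom the paper leaves entirely implicit --- stability of strict epimorphisms (resp.\ monomorphisms) under pullback (resp.\ pushout), with goodness of the induced filtration supplied by \cite[A.1.1.2]{Laumon-TransfCanon} and strictness read off from the explicit description of the filtration steps --- is exactly the one you work out, and your argument for it is sound.
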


\begin{proof}
This is straighforward from previous Lemmas. 
For instance, the condition \cite[1.0.2.(vi)]{Laumon-CatFiltDmod} are the last two statements of 
\ref{strictness-comp}.
\end{proof}

\begin{ntn}
[Localisation]
\label{nota-local}
Let $f$ be a homogeneous element of $\gr D$.
We denote by $(D _{[f]}, D _{[f],i})$ the complete filtered ring of $(D , D _{i})$ relatively to 
$S _{1} (f):= \{ f ^{n}\; ,\; n \in \N\}\subset \gr D$ (see the definition after \cite[Corollaire A.2.3.4]{Laumon-TransfCanon}).

Let $(M, M _i)$ be a good filtered $(D, D  _i)$-module.
We put 
\begin{equation}
\label{nota-local-defM}
(M _{[f]}, M _{[f],i}): = (D _{[f]}, D _{[f],i}) \otimes _{ (D, D _{i}) } (M, M _{i}),
\end{equation}
the localized filtered module of $(M, M _i)$ with respect to $S _{1} (f)$.
We remind that 
$(M _{[f]}, M _{[f],i})$ is also a good filtered 
$(D _{[f]}, D _{[f],i})$-module (see \cite[A.2.3.6]{Laumon-TransfCanon})
and $\gr M _{[f]} \riso \gr D _{[f]} \otimes _{\gr D } \gr M$ (see \cite[A.1.1.3]{Laumon-TransfCanon}).
\end{ntn}

The results and proofs of Malgrange in \cite[IV.4.2.3]{Malgrange-Carac-Homol-dim}
(we can also find the proof in the book \cite[D.2.2]{HTT-DmodPervSheavRep})
can be extended without further problem in the context of complete filtered rings:
\begin{lem}
\label{4.2.3.2Malgrange}
Let $(M, M _i)$ be a good filtered $(D, D  _i)$-module.
Then there exists some free filtered $(D, D  _i)$-modules of finite type 
$(L _{n}, L  _{n, i}) $ with 
$n \in \N$ and strict
morphisms of good filtered $(D, D  _i)$-modules 
$(L _{n+1}, L  _{n+1, i}) \to (L _{n}, L  _{n, i}) $
and 
$(L _{0}, L  _{0, i}) 
\to 
(M, M _i)$
such that 
$L _{\bullet} \to M$ is a resolution of $M$ (in the category of $D$-modules).

We call such a resolution
$(L _{\bullet}, L  _{\bullet, i}) $ a ``good resolution'' 
of $(M, M _i)$.

\end{lem}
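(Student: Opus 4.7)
The plan is to construct $L_\bullet$ by induction on $n$, at each stage using Remark \ref{rem-strict-epi-good} to obtain a strict surjection from a free filtered module of finite type onto the filtered kernel produced at the previous step.

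To begin, the goodness of $(M, M_i)$ together with Remark \ref{rem-strict-epi-good} yields a strict epimorphism $u_0 \colon (L_0, L_{0,i}) \twoheadrightarrow (M, M_i)$ with $(L_0, L_{0,i})$ free of finite type. I then set $K_0 := \ker u_0$, endowed with the induced filtration $K_{0,i} := K_0 \cap L_{0,i}$, which makes the inclusion $K_0 \hookrightarrow (L_0, L_{0,i})$ a strict monomorphism by Lemma \ref{lem-gru-injetc}.1; moreover, $K_0$ is itself a good filtered $(D, D_i)$-module by \cite[A.1.1.2]{Laumon-TransfCanon}, the reference already invoked in the proof of Lemma \ref{lemexseq-imu-coimu}.

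For the inductive step, suppose strict morphisms $d_1, \ldots, d_n$ of good filtered modules have been built. I form $K_n := \ker d_n$ with the induced filtration, still good by the same reference, apply Remark \ref{rem-strict-epi-good} once more to obtain a strict epimorphism $p_{n+1} \colon (L_{n+1}, L_{n+1,i}) \twoheadrightarrow K_n$ from a free filtered module of finite type, and define $d_{n+1}$ as the composition of $p_{n+1}$ with the strict monomorphism $K_n \hookrightarrow (L_n, L_{n,i})$. The composite $d_{n+1}$ is then strict by Lemma \ref{strictness-comp}.1. On underlying $D$-modules one has $\im d_{n+1} = K_n = \ker d_n$ by construction (and similarly $\im u_0 = M$, $\im d_1 = \ker u_0$), so the complex $L_\bullet \to M$ is exact.

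The only point that demands genuine attention is the strictness of the transition maps $d_{n+1}$, and this reduces to the strict monomorphism property of the filtered inclusion $K_n \hookrightarrow (L_n, L_{n,i})$ combined with Lemma \ref{strictness-comp}.1 (a strict epimorphism followed by a strict monomorphism is strict). Everything else — the goodness of filtered kernels and the existence of strict covers by free filtered modules of finite type — is imported unchanged from Remark \ref{rem-strict-epi-good} and \cite[A.1.1.2]{Laumon-TransfCanon}, which is precisely why the classical Malgrange argument transfers to the complete filtered setting without modification.
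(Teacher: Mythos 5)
Your construction is correct and is essentially the argument the paper gives: a strict epimorphism from a free filtered module of finite type onto $(M,M_i)$, the kernel with its induced filtration taken in the exact category of good filtered modules, and iteration, with strictness of the transition maps coming from the composition of a strict epimorphism with a strict monomorphism (Lemma \ref{strictness-comp}.1). The paper phrases the goodness of the kernel via Proposition \ref{exact-category} rather than citing \cite[A.1.1.2]{Laumon-TransfCanon} directly, but this is the same fact.
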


\begin{proof}
This is almost the same as \cite[IV.4.2.3.2]{Malgrange-Carac-Homol-dim}.
For the reader, we remind the construction: 
with the remark \ref{rem-strict-epi-good}, 
there exists a strict epimorphism of good filtered $(D, D  _i)$-modules
of the form 
$\phi _0 \colon (L _0, L  _{0, i}) \to (M, M _i)$, 
with $(L _0, L  _{0, i})$
a free filtered $(D, D  _i)$-module of finite type.
Let $ (M _1, M _{1,i}) $ be the kernel of $\phi _0$ (in the category of 
good filtered $(D, D  _i)$-modules: see \ref{exact-category}). 
Since $ (M _1, M _{1,i}) $ is good, 
there exists a strict epimorphism 
of the form 
$\phi _1 \colon (L _1, L  _{1, i}) \to (M _1, M _{1,i})$, 
with $(L _1, L  _{1, i})$
a free filtered $(D, D  _i)$-module of finite type.
Hence, the morphism 
$(L _1, L  _{1, i}) \to (L _0, L  _{0, i})$ is strict. 
We go on similarly.

\end{proof}

\begin{rem}
Let $(L _{\bullet}, L  _{\bullet, i}) $ be a good resolution 
of $(M, M _i)$.
Then 
$ \gr (L _{\bullet}, L  _{\bullet, i})$ 
is a resolution of 
$\gr (M, M _i)$ 
by 
free $\gr (D, D  _i)$-modules of finite type (use the properties of strictness given in \ref{exact-category}).
\end{rem}

\begin{lem}
\label{lem-subquotientHgr}
Let $K ^{\bullet}$ be a complex of 
abelian groups. 
Let $(F _{i} K ^{\bullet}) _{i\in \Z}$ be an increasing filtration of $K ^{\bullet}$.
We put 
\begin{equation}
\label{lem-subquotientHgrdef}
F _{i} H ^{r} (K ^{\bullet}): = \mathrm{Im} ( H ^{r} ( F _{i} K ^{\bullet}) \to H ^{r} (K ^{\bullet})).
\end{equation}
Then 
$\gr ^F  _{i} (H ^{r} (K ^{\bullet}))$ is a subquotient of 
$H ^{r} (\gr ^F  _{i} K ^{\bullet})$.
\end{lem}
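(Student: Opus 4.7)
The plan is to compute both sides explicitly in terms of cycles and boundaries in $K^\bullet$, then identify $\gr^F_i H^r(K^\bullet)$ as a quotient of a natural subgroup of $H^r(\gr^F_i K^\bullet)$. Write $Z^r := \ker d^r$, $B^r := \im d^{r-1}$, and $Z_i^r := Z^r \cap F_i K^r$. By the definition \eqref{lem-subquotientHgrdef}, $F_i H^r(K^\bullet)$ is the image of $Z_i^r$ in $Z^r/B^r$, namely $(Z_i^r + B^r)/B^r$. Applying the second isomorphism theorem together with the elementary identity $Z_i^r \cap (Z_{i-1}^r + B^r) = Z_{i-1}^r + (F_i K^r \cap B^r)$ (which holds since $B^r \subseteq Z^r$) then yields
\[
\gr^F_i H^r(K^\bullet) \;\cong\; \frac{Z_i^r}{Z_{i-1}^r + (F_i K^r \cap B^r)}.
\]

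Next, the explicit description of cycles and boundaries in the quotient complex $F_i K^\bullet / F_{i-1} K^\bullet$ gives $H^r(\gr^F_i K^\bullet) = N/D$, where
\[
N := \{ x \in F_i K^r \,:\, dx \in F_{i-1} K^{r+1}\}, \qquad D := d(F_i K^{r-1}) + F_{i-1} K^r.
\]
Since $Z_i^r \subseteq N$, the inclusion induces a canonical homomorphism $Z_i^r \to H^r(\gr^F_i K^\bullet)$, whose image is a subgroup of $H^r(\gr^F_i K^\bullet)$ isomorphic to $Z_i^r / (Z_i^r \cap D)$.

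The key small computation is the identification $Z_i^r \cap D = d(F_i K^{r-1}) + Z_{i-1}^r$: given $z = dy + f$ with $z \in Z_i^r$, $y \in F_i K^{r-1}$ and $f \in F_{i-1} K^r$, the relation $df = dz - d^2 y = 0$ forces $f \in Z^r \cap F_{i-1} K^r = Z_{i-1}^r$. Since $d(F_i K^{r-1}) \subseteq F_i K^r \cap B^r$, passing to quotients shows that the subgroup $Z_i^r / (d(F_i K^{r-1}) + Z_{i-1}^r)$ of $H^r(\gr^F_i K^\bullet)$ surjects canonically onto $\gr^F_i H^r(K^\bullet)$ as computed above, which exhibits $\gr^F_i H^r(K^\bullet)$ as the required subquotient. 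The proof is essentially bookkeeping; no step is truly difficult, but care is needed to distinguish $d(F_i K^{r-1})$ from the a priori larger group $F_i K^r \cap B^r$, and it is precisely this discrepancy that prevents the subquotient from being an isomorphism in general.
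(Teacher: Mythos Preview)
Your argument is correct and follows essentially the same route as the paper's proof: both identify the intermediate group
\[
L \;=\; \frac{Z_i^r}{d(F_i K^{r-1}) + Z_{i-1}^r}
\]
as a subgroup of $H^r(\gr^F_i K^\bullet)$ (via the inclusion $Z_i^r \subseteq N$ and the kernel computation you call the ``key small computation''), and then observe that $L$ surjects onto $\gr^F_i H^r(K^\bullet)$. The only cosmetic difference is that you first simplify $\gr^F_i H^r(K^\bullet)$ to $Z_i^r/(Z_{i-1}^r + (F_i K^r \cap B^r))$ via the second isomorphism theorem before exhibiting the surjection, whereas the paper leaves it as $(Z_i^r + B^r)/(Z_{i-1}^r + B^r)$ and checks the surjection directly; your explicit remark about the gap between $d(F_i K^{r-1})$ and $F_i K^r \cap B^r$ is a nice clarification not spelled out in the original.
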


\begin{proof}
For instance, we can follow the last seven lines of the proof of \cite[D.2.4]{HTT-DmodPervSheavRep}
(or also at Malgrange's description of the corresponding spectral sequence
in \cite[IV.4.2.3.2]{Malgrange-Carac-Homol-dim}): 
denote by $d ^r \colon K ^r \to K ^{r+1}$ the morphism in $K ^\bullet$,
$d _i ^r \colon F _{i}  K ^r \to F _{i} K ^{r+1}$ the morphism in $F _iK ^\bullet$,
$\smash{\overline{d}} _i ^r 
\colon 
\gr ^F _i K ^{r}
=
F _{i}  K ^r / F _{i-1}  K ^r
\to 
F _{i} K ^{r+1}/F _{i-1} K ^{r+1}
=
\gr ^F _i K ^{r+1}$.
Since
$\ker d _i ^r = \ker d ^r  \cap F _i K ^r$, 
we get 
$F _{i} H ^{r} (K ^{\bullet})= \ker d ^r  \cap F _i K ^r + \im d ^{r-1} /\im d ^{r-1} $.
By definition we obtain:
\begin{equation}
\label{lem-subquotientHgrdefpre1}
\gr ^F  _{i} (H ^{r} (K ^{\bullet}))
:=
F _{i} H ^{r} (K ^{\bullet})/F _{i-1} H ^{r} (K ^{\bullet})
=
\ker d ^r  \cap F _i K ^r + \im d ^{r-1} /\ker d ^r  \cap F _{i-1} K ^r + \im d ^{r-1}.
\end{equation}
We have
$\ker \smash{\overline{d}} _i ^r 
=
\ker ( F _{i} K ^{r} \to 
\gr ^F _i K ^{r+1})/F _{i-1} K ^{r}$
and 
$\im \smash{\overline{d}} _{i-1} ^r 
=
d _i ^{r-1} (F _i K ^{r-1}) + F _{i-1} K ^{r} / F _{i-1} K ^{r}$.
Hence
\begin{equation}
\label{lem-subquotientHgrdefpre2}
H ^{r} (\gr ^F  _{i} K ^{\bullet})
:=
\ker \smash{\overline{d}} _i ^r /\im \smash{\overline{d}} _{i-1} ^r 
=
\ker ( F _{i} K ^{r} \to \gr ^F _i K ^{r+1})
/
d _i ^{r-1} (F _i K ^{r-1}) + F _{i-1} K ^{r} .
\end{equation}
Set 
$L= 
\ker d ^r  \cap F _i K ^r /d _i ^{r-1} (F _i K ^{r-1}) + \ker d ^r  \cap F _{i-1} K ^r $.
The inclusion 
$\ker d ^r  \cap F _i K ^r \subset 
\ker ( F _{i} K ^{r} \to \gr ^F _i K ^{r+1})$
induces the map 
$\phi 
\colon 
\ker d ^r  \cap F _i K ^r \to \ker ( F _{i} K ^{r} \to \gr ^F _i K ^{r+1})
/
d _i ^{r-1} (F _i K ^{r-1}) + F _{i-1} K ^{r} $. 
Let
$x \in \ker d ^r  \cap F _i K ^r$ be an element in the kernel of $\phi$. 
Then there exist
$y \in F _i K ^{r-1}$ and $z \in F _{i-1} K ^{r} $ such that
$x = d _i ^{r-1} (y) +z$. Since $d _i ^{r-1} (y)\in \ker d ^r$, 
we get $z \in \ker d ^r$ and then $z \in\ker d ^r  \cap F _{i-1} K ^r $. 
Hence $\ker \phi \subset d _i ^{r-1} (F _i K ^{r-1}) + \ker d ^r  \cap F _{i-1} K ^r $.
Since the converse is obvious, we get
$\ker \phi = d _i ^{r-1} (F _i K ^{r-1}) + \ker d ^r  \cap F _{i-1} K ^r $
From \ref{lem-subquotientHgrdefpre2}, 
this yields that
$L$ is a subobject of 
$H ^{r} (\gr ^F  _{i} K ^{\bullet})$.
Moreover, 
we have the epimorphism 
$\ker d ^r  \cap F _i K ^r \to 
\ker d ^r  \cap F _i K ^r + \im d ^{r-1} /\ker d ^r  \cap F _{i-1} K ^r + \im d ^{r-1}$.
Since 
$d _i ^{r-1} (F _i K ^{r-1}) + \ker d ^r  \cap F _{i-1} K ^r $ 
is in the kernel of this map, 
we get the factorisation
$L \to 
\ker d ^r  \cap F _i K ^r + \im d ^{r-1} /\ker d ^r  \cap F _{i-1} K ^r + \im d ^{r-1}$,
which is still an epimorphism.
From 
\ref{lem-subquotientHgrdefpre1},
this implies that
$L$ is a quotient of $\gr ^F  _{i} (H ^{r} (K ^{\bullet}))$.
Hence, 
$\gr ^F  _{i} (H ^{r} (K ^{\bullet}))$ is a quotient of a submodule of 
$H ^{r} (\gr ^F  _{i} K ^{\bullet})$.

\end{proof}

\begin{empt}
Let $(M,M _i)$ and $(N, N _i)$ be two filtered $(D, D  _i)$-modules.
For any integer $i \in \Z$, 
let $F _i \mathrm{Hom} _{D} ( M , N) $ be the subgroup  of $Hom _{D} ( M , N)$ of  the elements $\phi$ such that, 
for any integer $j\in \Z$, $\phi ( M _{j}) \subset N _{i +j}$.
For any integer $j \in \Z$, for any $\phi \in F _i \mathrm{Hom} _{D} ( M , N)$, 
we get a morphism
$\gr _i M \to \gr _{i+j} N$
defined by sending the class of a element 
$x \in M _i$ to the class of $\phi ( x)$.
Hence $\phi$ induces a map
$\gr M \to \gr N$, which is in fact $\gr _D $-linear. 
Hence we get the canonical  morphism
$F _i \mathrm{Hom} _{D} ( M , N)
\to 
\mathrm{Hom} _{\gr D}
 (\gr  M, \gr N)$
 and then 
\begin{equation}
\label{grHom2Homgr}
 \gr ^F  \mathrm{Hom} _{D} ( M , N)
\to 
\mathrm{Hom} _{\gr D}
 (\gr  M, \gr N).
\end{equation}
\end{empt}

\begin{lem}
\label{grHom2Homgr-free}
Let $(L,L _i)$ be a free filtered $(D, D  _i)$-module of finite type.
Let $(N, N _i)$ be  filtered $(D, D  _i)$-module.
The canonical morphism 
$$ \gr  \mathrm{Hom} _{D} ( L , N)
\to 
\mathrm{Hom} _{\gr D}
 (\gr  L, \gr N)
$$
of \ref{grHom2Homgr} is an isomorphism of abelian groups.
\end{lem}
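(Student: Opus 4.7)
The plan is to reduce immediately to the rank-one case and then do an explicit computation, keeping careful track of the shift $(n)$.

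Since by definition $(L, L _i)$ is a finite direct sum of filtered modules of the form $(D ( n ) , D ( n) _i)$, and since both sides of the canonical morphism \ref{grHom2Homgr} transform such a finite direct sum in the first variable into a finite direct sum in a manner compatible with \ref{grHom2Homgr} (on the left, $F _i \mathrm{Hom} _D (-, N)$ commutes with finite direct sums as a subgroup of $\mathrm{Hom} _D (-, N)$, and then so does $\gr$; on the right, $\gr$ of a direct sum is the direct sum of the pieces, and $\mathrm{Hom} _{\gr D} (-, \gr N)$ commutes with finite direct sums in the first variable), it suffices to prove the lemma when $(L, L _i) = (D (n), D (n) _i)$ for a single $n \in \Z$.

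In that case, evaluation at $1$ gives an isomorphism $\mathrm{Hom} _D (D (n), N) \riso N$, $\phi \mapsto \phi (1)$. An element $\phi$ lies in $F _i \mathrm{Hom} _D (D (n), N)$ if and only if $\phi (D _{j+n}) \subset N _{i+j}$ for every $j \in \Z$: the case $j = -n$ forces $\phi (1) \in N _{i-n}$, while conversely, if $\phi (1) \in N _{i-n}$ then $\phi (D _{j+n}) = D _{j+n} \cdot \phi (1) \subset D _{j+n} \cdot N _{i-n} \subset N _{i+j}$. Hence the identification $\mathrm{Hom} _D (D (n), N) \riso N$ carries the filtration $F _\bullet$ to the filtration $N _{\bullet - n}$, so
$$\gr _i \mathrm{Hom} _D (D (n), N) \riso \gr _{i-n} N, \qquad \gr \mathrm{Hom} _D (D (n), N) \riso \gr N$$
as abelian groups.

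For the right-hand side, $\gr D (n)$ is a free $\gr D$-module of rank one generated by the class $\overline{1}$ of $1$ (which lives in degree $-n$ of $\gr D (n)$), so evaluation at $\overline{1}$ gives $\mathrm{Hom} _{\gr D} (\gr D (n), \gr N) \riso \gr N$ as abelian groups. Tracing through the construction in \ref{grHom2Homgr}, one sees that the class in $\gr _i \mathrm{Hom} _D (D (n), N)$ of a $\phi$ with $\phi (1) \in N _{i-n}$ is sent to the $\gr D$-linear map $\gr D (n) \to \gr N$ carrying $\overline{1}$ to the class of $\phi (1)$ in $\gr _{i-n} N$; modulo the two identifications above, this is just the identity of $\gr N$, so \ref{grHom2Homgr} is an isomorphism. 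The only real subtlety is bookkeeping with the grading shift $(n)$, and in particular correctly matching the degrees $\gr _i \mathrm{Hom} _D (D (n), N) \cong \gr _{i-n} N$ on the two sides; once this is done, everything is tautological.
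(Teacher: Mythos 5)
Your proof is correct and follows essentially the same route as the paper's: reduce by additivity in the first variable to the rank-one case $(D(n),D(n)_i)$, then identify both sides with $\gr N$ via evaluation at $1$. The only (cosmetic) difference is that you carry the shift $n$ explicitly through the computation, whereas the paper first twists the filtrations to reduce to $n=0$; your degree bookkeeping $F_i\,\mathrm{Hom}_D(D(n),N)\cong N_{i-n}$ is accurate.
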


\begin{proof}
Let $(M,M _i)$, $(M',M '_i)$ be two filtered $(D, D  _i)$-modules
and $(M'',M ''_i):= (M,M _i) \oplus (M',M '_i)$. 
Then the morphism of \ref{grHom2Homgr}
$ \gr ^F  \mathrm{Hom} _{D} ( M'' , N)
\to 
\mathrm{Hom} _{\gr D}
 (\gr  M'', \gr N)$ is an isomorphism if and only if 
so are 
$ \gr ^F  \mathrm{Hom} _{D} ( M , N)
\to 
\mathrm{Hom} _{\gr D}
 (\gr  M, \gr N)$
 and 
 $ \gr ^F  \mathrm{Hom} _{D} ( M' , N)
\to 
\mathrm{Hom} _{\gr D}
 (\gr  M', \gr N)$. 
 Hence, we can suppose
 that 
 $(L, L _i) = ( D ( n ) , D ( n) _i)$, for some integer $n$. 
Twisting the filtrations, we can suppose $n =0$. 
Finally, we compute that 
the morphism of 
\ref{grHom2Homgr}
$\gr  \mathrm{Hom} _{D} ( D , N)
\to 
\mathrm{Hom} _{\gr D}
 (\gr  D, \gr N)$
 is, modulo the identifications 
 $N = \mathrm{Hom} _{D} ( D , N)$
 and 
$ \mathrm{Hom} _{\gr D}
 (\gr  D, \gr N)
 =
 \gr N$,
 the identity, which is an isomorphism.
\end{proof}

\begin{prop}
\label{Extgr-nogr}
Let $(M, M _i)$ be a good filtered $(D, D  _i)$-module.
Let $(N, N _i)$ be a filtered $(D, D  _i)$-module.
For any integer $r$, there exists a canonical filtration $F$ of 
$\Ext ^r _{D} (M , N ) $ 
satisfying the following properties
\begin{enumerate}
\item $\Ext ^r _{D} (M , N )  = \cup _{i\in \Z} F _{i} \,\Ext ^r _{D} (M , N ) $,
\item $\gr ^{F} \Ext ^r _{D} (M , N ) $ is a subquotient of 
$\Ext ^r _{\gr D } (\gr M , \gr N)$. 

\item Suppose $(N, N _i)=(D, D  _i)$. 
The canonical filtration $(F _{i}\, \Ext ^r _{D} (M , D)) _{i\in \Z} $
of the right $D$-module $\Ext ^r _{D} (M , D) $
is a good filtration.
In particular, $0  = \cap _{i\in \Z} F _{i}\, \Ext ^r _{D} (M , D)$.
Moreover, we have the implication 
$$\Ext ^r _{\gr D } (\gr M , \gr D)=0
\Rightarrow
\Ext ^r _{D} (M , D) =0.$$
\end{enumerate}
\end{prop}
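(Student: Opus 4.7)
The plan is to compute $\Ext^r_D(M,N)$ as the cohomology of a $\Hom$-complex built from a good filtered resolution of $(M,M_i)$, and to transport the natural filtration of this complex to its cohomology via Lemma \ref{lem-subquotientHgr}. Concretely, I would first apply Lemma \ref{4.2.3.2Malgrange} to choose a good resolution $(L_\bullet, L_{\bullet,i}) \to (M, M_i)$ by free filtered $(D, D_i)$-modules of finite type, then form $K^\bullet := \Hom_D(L_\bullet, N)$ equipped with the filtration $F_i K^r := F_i \Hom_D(L_r, N)$ introduced in the paragraph preceding Lemma \ref{grHom2Homgr-free}. Since $L_\bullet \to M$ is in particular a free resolution in the underlying category of $D$-modules, one has $\Ext^r_D(M,N)=H^r(K^\bullet)$, and I define the filtration on $\Ext^r_D(M,N)$ by formula \ref{lem-subquotientHgrdef}.

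With this setup, assertion (2) is essentially formal. Lemma \ref{lem-subquotientHgr} yields that $\gr^F \Ext^r_D(M,N)$ is a subquotient of $H^r(\gr^F K^\bullet)$. Each $L_r$ being free filtered of finite type, Lemma \ref{grHom2Homgr-free} identifies $\gr^F K^r \riso \Hom_{\gr D}(\gr L_r, \gr N)$; and by the remark following Lemma \ref{4.2.3.2Malgrange}, $\gr L_\bullet$ is a free resolution of $\gr M$ over $\gr D$, so $H^r(\gr^F K^\bullet) = \Ext^r_{\gr D}(\gr M, \gr N)$. For assertion (1), since $L_r$ is finitely generated, any $D$-linear map $L_r \to N$ sends its finitely many generators into some $N_j$ and therefore lies in $F_i K^r$ for $i$ large enough; exhaustiveness of the resulting filtration on $\Ext^r$ follows.

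The delicate part is (3), where $N = D$. Here each $K^r = \Hom_D(L_r, D)$ is a free filtered right $(D,D_i)$-module of finite type, hence good and complete, and $\gr K^r \cong \Hom_{\gr D}(\gr L_r, \gr D)$ is a finitely generated free right $\gr D$-module. Setting $Z^r := \ker d^r$ with induced filtration $F_i Z^r := Z^r \cap F_i K^r$, the inclusion $Z^r \hookrightarrow K^r$ is strict, whence $\gr Z^r \hookrightarrow \gr K^r$ by Lemma \ref{lem-gru-injetc}.2; right-noetherianity of $\gr D$ then forces $\gr Z^r$ to be finitely generated. Combined with the completeness of $Z^r$ inside the complete ambient $K^r$, this yields a strict epimorphism onto $Z^r$ from some free filtered right $(D,D_i)$-module of finite type, so $(Z^r, F_i Z^r)$ is good. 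This passage from finite generation of $\gr Z^r$ to goodness of $(Z^r, F_i Z^r)$ is the step I expect to require the most care, ultimately resting on Laumon's arguments for complete filtered rings. Pushing the good filtration on $Z^r$ forward through the surjection $Z^r \twoheadrightarrow Z^r/\im d^{r-1} = \Ext^r_D(M, D)$, which is strict by construction, and applying Lemma \ref{strictness-comp}.2 together with Remark \ref{rem-strict-epi-good}, yields the desired good filtration on $\Ext^r_D(M,D)$. Separation, and hence $\cap_i F_i = 0$, is then automatic since good filtrations are complete. Finally, when $\Ext^r_{\gr D}(\gr M, \gr D) = 0$, part (2) gives $\gr^F \Ext^r_D(M, D) = 0$, and exhaustiveness together with separation force $\Ext^r_D(M, D) = 0$.
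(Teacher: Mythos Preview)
Your proposal is correct and follows essentially the same route as the paper: choose a good resolution via Lemma~\ref{4.2.3.2Malgrange}, filter the Hom-complex, apply Lemmas~\ref{lem-subquotientHgr} and~\ref{grHom2Homgr-free} for parts (1) and (2), and for part (3) use that the filtration on each $K^n=\mathrm{Hom}_D(L_n,D)$ is good together with the fact that goodness passes to kernels and quotients. The only difference is that the paper dispatches the ``delicate'' step you flag (goodness of the induced filtration on $\ker d_n$ and then on $\ker d_n/\im d_{n-1}$) by a direct citation of \cite[A.1.1.2]{Laumon-TransfCanon}, rather than unfolding the argument about $\gr Z^r$ being finitely generated; your more explicit sketch is fine but unnecessary here, and the paper also records the small observation that the quotient filtration on $\ker d_n/\im d_{n-1}$ agrees with the one defined in~\ref{lem-subquotientHgrdef}.
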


\begin{proof}
From \ref{4.2.3.2Malgrange} and with its definition,
there exists a good resolution $(L _{\bullet}, L  _{\bullet, i}) $
of $(M, M _i)$.
We put 
$K ^{\bullet}:= \mathrm{Hom} _{D} ( L _{\bullet}, N)$.
Since $L _{\bullet}$ is a resolution of 
$M$ by projective $D$-modules, we get 
$H ^{r} (K ^{\bullet})= 
\Ext ^r _{D} (M , N )$.

Let $F _{i} K ^{n}$ be the subset of  
the elements $\phi$ of $Hom _{D} ( L _{n}, N)$ such that, 
for any integer $j\in \Z$, $\phi ( L _{n, j}) \subset N _{i +j}$.
Since 
$L _n$ is 	a $D$-module of finite type, we get
$\cup _{i \in \Z} F _{i} K ^{n} = K ^{n}$.
With the canonical induced filtration on 
 $H ^{r} (K ^{\bullet})= 
\Ext ^r _{D} (M , N )$
 (see \ref{lem-subquotientHgr}), 
this yields the first property.
 Since 
$L _n$ is a free 
filtered $(D, D  _i)$-modules of finite type, from Lemma \ref{grHom2Homgr-free}, 
the canonical morphism
$\gr    K ^{n}
\to 
\mathrm{Hom} _{\gr D}
 (\gr  L _{n}, \gr N)$
 is an isomorphism.
Since $\gr  L _{\bullet}$ is a resolution of 
$\gr M$ by projective $\gr D $-modules, we get
 $H ^{r} (\gr   K ^{\bullet})
 =\Ext ^r _{\gr D } (\gr M , \gr N)$.
This implies the second point by using Lemma \ref{lem-subquotientHgr}.

When $(N, N _i)=(D, D  _i)$, the filtration
$F _{i} K ^{n}$ of the right $D$-module $K ^{n}$
is a good filtration. We denote by $d _n \colon K ^{n} \to K ^{n+1}$ the canonical morphisms. 
From \cite[A.1.1.2]{Laumon-TransfCanon}, 
the induced filtrations on $\ker d _n$
and next on $\ker d _n / Im \,d _{n-1}$ (induced from the surjection
$\ker d _n \to \ker d _n / Im \,d _{n-1}$)
are good. 
We notice that this filtration on $\ker d _n / \mathrm{Im} \,d _{n-1}=H ^{n} (K ^{\bullet})$
is the same as that defined at \ref{lem-subquotientHgrdef}, which is the first assertion of the third point.
With the second point, this yields the rest of the third point. 
\end{proof}

\section{Purity of the characteristic variety of holonomic $F$-$\D$-modules}

\begin{lemm}
\label{lemm-codimmodp}
Let $X$ be an affine smooth variety over $k$, 
$\overline{D}: =\Gamma (X, \D ^{(0)} _{X/k})$,
$(\overline{D} _i) _{i\in \N}$ be its filtration by the order of the operators, 
$\overline{f}$ be an homogeneous element of 
$\gr\, \overline{D}$.
Let $(\overline{M}, \overline{M} _i)$ and $(\overline{N}, \overline{N} _i)$ be two good filtered $(\overline{D} _{[\overline{f}]}, \overline{D}  _{[\overline{f}],i})$-modules
and $r$ be an integer. 
\begin{enumerate}
\item We have $\mathrm{codim} \, \Ext ^r _{\gr \overline{D} _{[\overline{f}]}} (\gr \overline{M} , \gr \overline{N}) \geq r$.

\item If $r < \mathrm{codim} \, \gr \overline{M}$ then $\Ext ^r _{\gr \overline{D} _{[\overline{f}]}} (\gr \overline{N} , \gr \overline{D} _{[\overline{f}]}) =0$.
\end{enumerate}

\end{lemm}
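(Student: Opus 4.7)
My approach is to reduce both claims to classical homological algebra for regular commutative Noetherian rings. The first step is to identify $\gr \overline{D}$ explicitly. Since $X$ is an affine smooth $k$-variety and the filtration on $\overline{D} = \Gamma(X, \D^{(0)}_{X/k})$ is by order of operators at level $0$, the principal symbol map gives an isomorphism $\gr \overline{D} \riso \sym_{\O(X)} T_{X/k} = \Gamma(T^*X, \O_{T^*X})$, which is a smooth commutative $k$-algebra of finite type, hence a regular Noetherian ring. Combining this with the isomorphism $\gr \overline{D}_{[\overline{f}]} \riso (\gr \overline{D})_{\overline{f}}$ recalled at \ref{nota-local} shows that $R := \gr \overline{D}_{[\overline{f}]}$ is itself a regular commutative Noetherian ring, since localization preserves regularity.

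For part (1) I would argue prime by prime. Fix a prime $\mathfrak{p}$ of $R$ belonging to the support of $E := \Ext^r_R(\gr \overline{M}, \gr \overline{N})$; since $\gr \overline{M}$ is finitely generated, localization commutes with $\Ext$, so $\Ext^r_{R_\mathfrak{p}}(\gr \overline{M}_\mathfrak{p}, \gr \overline{N}_\mathfrak{p}) \neq 0$. This forces the projective dimension of $\gr \overline{M}_\mathfrak{p}$ over the regular local ring $R_\mathfrak{p}$ to be at least $r$. The Auslander--Buchsbaum formula then yields $r \leq \mathrm{pd}_{R_\mathfrak{p}}(\gr \overline{M}_\mathfrak{p}) \leq \mathrm{depth}\, R_\mathfrak{p} = \dim R_\mathfrak{p}$, so $\mathrm{ht}(\mathfrak{p}) \geq r$. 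Applying this to every minimal prime of $\mathrm{Supp}(E)$ gives $\mathrm{codim}\, E \geq r$.

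For part (2), read as $\Ext^r_R(\gr \overline{M}, R) = 0$ (the hypothesis is on $\gr \overline{M}$; the $\gr \overline{N}$ in the display seems to be a typo), I would invoke the identity $\mathrm{grade}(\gr \overline{M}) = \mathrm{codim}(\gr \overline{M})$, valid over any regular (or more generally Cohen--Macaulay) commutative Noetherian ring. Here $\mathrm{grade}(\gr \overline{M}) := \inf\{i : \Ext^i_R(\gr \overline{M}, R) \neq 0\}$, and the identity again follows by Auslander--Buchsbaum applied at the generic points of the support. Therefore $r < \mathrm{codim}\, \gr \overline{M}$ forces $\Ext^r_R(\gr \overline{M}, R) = 0$.

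The only genuine subtlety is the identification of the Laumon-style localization $\gr \overline{D}_{[\overline{f}]}$ with the usual commutative localization of the polynomial algebra $\gr \overline{D}$; this is precisely the formula cited from \cite[A.1.1.3]{Laumon-TransfCanon}. Once $R$ is known to be a regular commutative Noetherian ring, both statements are entirely standard, so I do not expect a substantive obstacle beyond unwinding the conventions.
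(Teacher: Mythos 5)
Your proposal is correct and follows essentially the same route as the paper: the paper likewise reduces to the isomorphism $\gr \overline{D}_{[\overline{f}]} \riso (\gr \overline{D})_{\overline{f}}$ (citing Laumon A.2) and then invokes the classical statement for regular commutative Noetherian rings (citing \cite[D.4.4]{HTT-DmodPervSheavRep}), whose proof via Auslander--Buchsbaum and the grade--codimension identity you have simply written out. Your reading of part (2) as a statement about $\gr \overline{M}$ is also consistent with how the lemma is applied in the proof of Proposition \ref{prop-extr=0}.
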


\begin{proof}
By construction (see \cite[A.2]{Laumon-TransfCanon}), 
we get 
$\gr \overline{D} _{[\overline{f}]}\riso (\gr \, \overline{D} )_{\overline{f}}$.
Then, this is well known (e.g. see \cite[D.4.4]{HTT-DmodPervSheavRep}).
\end{proof}

\begin{empt}
[Localisation and $\pi$-adic completion]
\label{loca-completion}
Let $\X$ be an affine smooth $\V$-formal scheme, $X _n$ be the reduction of $\X$ modulo $\pi ^{n+1}$. 
We put $D: =\Gamma (X, \D ^{(0)} _{\X/\S})$
and  $D _n: =\Gamma (X, \D ^{(0)} _{X _n/S _n})$. These rings are canonically filtered by the order of the differential operators ; 
we denote by  
$(D ,  D _{i} )$ and $(D _{n}, D _{n , i})$ the 
(ind-pro) complete filtered rings. 
Let  $f$ be an homogeneous element of 
$\gr\, D$ and $f _n$ be its image in 
$\gr\, D _n$.
With the notation of \ref{nota-local}, 
by using the same arguments as in the proof of \cite[2.3]{Abe-microdiff}, we get the canonical isomorphism of (ind-pro) complete filtered ring

\begin{equation}
\label{microdiffchgbase}
(D _{[f]} ,  D _{[f], i} ) \otimes _{\V}  \V / \pi ^{n+1} 
\riso 
(D _{n [f _n]}, D _{n [f _n], i}).
\end{equation}

We put $\widehat{D} _{[f]}$ (be careful that this notation is slightly different from that of 
\ref{nota-local})
as the $\pi$-adic completion of 
$D _{[f]}$, i.e. 
$\widehat{D} _{[f]}:= \underset{n}{\underleftarrow{\lim}}\,D _{[f]} / \pi ^{n+1} D _{[f]}
\riso  \underset{n}{\underleftarrow{\lim}} D _{n, [f _n]}$.
Using Corollary \cite[A.1.1.1]{Laumon-TransfCanon} and \cite[3.2.2.(iii)]{Be1}, 
we get from the isomorphism 
\ref{microdiffchgbase} the noetherianity of 
$\widehat{D} _{[f]}$.

Finally, when there is no confusion 
with the notation \ref{nota-local}, 
for any coherent $\widehat{D}$-module (resp. coherent $\widehat{D} _\Q$-module) $M$ (resp. $N$), 
we set (by default in this new context)
$M _{[f]} := \widehat{D} _{[f]} \otimes _{\widehat{D}} M$
(resp. $N _{[f]} := \widehat{D} _{[f]} \otimes _{\widehat{D}} N$).

\end{empt}

\begin{lemm}
\label{flat-local-compl}
With the notation of \ref{loca-completion}, the homomorphism 
$\widehat{D} \to \widehat{D} _{[f]}$
is flat.
\end{lemm}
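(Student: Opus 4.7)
The plan is to apply the local criterion of flatness along the ideal $(\pi) \subset \widehat{D}$. The isomorphism \eqref{microdiffchgbase}, combined with the definition $\widehat{D}_{[f]} = \underleftarrow{\lim}_n D_{n,[f_n]}$, yields $\widehat{D}_{[f]}/\pi^{n+1}\widehat{D}_{[f]} \simeq D_{n,[f_n]}$, so that $\widehat{D}_{[f]}$ is $\pi$-adically complete and separated. Since $\widehat{D}$ is noetherian and $\pi$-adically complete, the criterion reduces flatness of $\widehat{D} \to \widehat{D}_{[f]}$ to two points:
\begin{enumerate}[(i)]
\item $\pi$ is a non-zero-divisor on $\widehat{D}_{[f]}$;
\item the reduction $D_0 = \widehat{D}/\pi\widehat{D} \to \widehat{D}_{[f]}/\pi\widehat{D}_{[f]} = D_{0,[f_0]}$ is flat.
\end{enumerate}

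Point (i) is the easier one. Both $\widehat{D}$ and $\widehat{D}_{[f]}$ are $\V$-flat: this is classical for $\widehat{D}$, and for $\widehat{D}_{[f]}$ one observes that $\gr D_{[f]} \simeq (\gr D)_f$ is $\V$-flat, that good filtrations propagate $\V$-flatness to $D_{[f]}$, and that $\pi$-adic completion preserves $\V$-flatness on noetherian modules. In particular, $\pi$ is regular on $\widehat{D}_{[f]}$.

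For (ii) I would pass to the associated graded with respect to the order filtration. By \cite[A.2.3.6]{Laumon-TransfCanon}, $\gr D_{0,[f_0]} \simeq (\gr D_0)_{f_0}$, which is flat over $\gr D_0$ as an ordinary localization of a commutative noetherian ring. Given a finitely generated $D_0$-module $M$, endow it with a good filtration and choose a good resolution $(L_\bullet, L_{\bullet,i})$ as in Lemma \ref{4.2.3.2Malgrange}. Applying $D_{0,[f_0]} \otimes_{D_0} -$ gives a filtered complex whose associated graded is $(\gr D_0)_{f_0} \otimes_{\gr D_0} \gr L_\bullet$, which resolves $(\gr D_0)_{f_0} \otimes_{\gr D_0} \gr M$ by graded flatness. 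Using Proposition \ref{exact-category} and Lemma \ref{lem-subquotientHgr} to lift the vanishing of graded Tor to vanishing of filtered Tor (via exhaustivity and completeness of the good filtrations), one obtains $\mathrm{Tor}^{D_0}_1(D_{0,[f_0]}, M) = 0$, hence flatness.

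The main obstacle is this last lifting step: transferring flatness from the commutative graded ring to the non-commutative filtered ring. This is standard filtered-algebra technology (cf.\ \cite[A.2]{Laumon-TransfCanon}), but it genuinely requires the completeness of the filtrations and the machinery of good filtered modules developed in the preceding section.
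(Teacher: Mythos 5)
Your proof is correct and follows essentially the same route as the paper: the paper's own (one-line) proof combines the base-change isomorphism \ref{microdiffchgbase} with Berthelot's completion--flatness criterion \cite[3.2.3.(vii)]{Be1} (your steps (i)--(ii)) and with Laumon's flatness of the filtered localization \cite[A.2.3.4.(ii)]{Laumon-TransfCanon}. The only difference is that you re-derive this last ingredient via good resolutions and passage to the graded ring instead of citing it, which is a correct (and instructive, if longer) way to supply the step you rightly identify as the crux.
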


\begin{proof}
This is a consequence of \cite[3.2.3.(vii)]{Be1},
\cite[A.2.3.4.(ii)]{Laumon-TransfCanon}
and \ref{microdiffchgbase}.
\end{proof}

\begin{rem}
\label{rem-loca-completion}
With the notation of \ref{loca-completion}, 
let $M$ be a coherent $\widehat{D}$-module.
We put 
$M _n := M /  \pi ^{n+1} M$. From \cite[5.2.3.(iv)]{Beintro2}, there exists a good filtration 
$(M _{n,i}) _{i\in \N}$ of $M _n$ indexed by $\N$. 
We recall (see notation \ref{nota-local-defM}) that we get a good filtered 
$(D _{n, [f _n]}, D _{n, [f _n],i})$-module by putting 
$(M _{n, [f _n]}, M _{n, [f _n],i}): = (D _{n, [f _n]}, D _{n, [f _n],i}) \otimes _{ (D _n, D _{n, i}) } (M _n, M _{n, i})$.
Moreover, since 
$\widehat{D} _{[f]}
/  \pi ^{n+1} 
\widehat{D} _{[f]}
\riso 
D _{n, [f _n]}$ (use \ref{microdiffchgbase}), 
then 
\begin{equation}
\label{323Be1}
M _{[f]} /  \pi ^{n+1} M _{[f]} 
\riso M _{n, [f _n]}.
\end{equation}

From \cite[3.2.3.(v)]{Be1}, since $M _{[f]}$ is a $\widehat{D} _{[f]}$-module of finite type, 
then $M _{[f]}$ is complete for the $\pi$-adic topology.
 Hence, using \ref{323Be1}
 we get the canonical isomorphism of $\widehat{D} _{[f]}$-modules
$M _{[f]}\riso  \underset{n}{\underleftarrow{\lim}} M _{n, [f _n]}$.

\end{rem}

\begin{empt}
[Characteristic variety (of level $m$)]
Let $\X$ be a smooth $\V$-formal scheme, $X $ be the reduction of $\X$ modulo $\pi $.
Let $m\in \N$ be an integer. 
Let us recall Berthelot's definition of characteristic varieties (of level $m$) as explained in \cite[5.2]{Beintro2}. 
\begin{enumerate}
\item Let $\G$ be a coherent 
$\D _{X} ^{(m)}$-module. 
Berhelot defined the cotangent space of level $m$ of $X$ 
defined by putting 
$T ^{(m)*} X := (\Spec \gr \D _{X} ^{(m)} ) _{\mathrm{red}}$, 
where $\D _{X} ^{(m)}$ is filtered by the order.
Choose a good filtration $(\G _n) _{n\in \N}$ (see the definition \cite[5.2.3]{Beintro2}),
i.e. a filtration such that $\gr \G$ is a $\gr \D _{X} ^{(m)}$, 
where $\D _{X} ^{(m)}$ is filtered by the order. 
The characteristic variety of level $m$ of $\G$, denoted by 
$\mathrm{Car} ^{(m)} (\G)$ is by definition the support of 
$\gr \G$ in $T ^{(m)*} X $.
Berthelot checked that this is well defined. 
In particular, we see that $\G =0$ if and only if 
$\mathrm{Car} ^{(m)} (\G)$ is empty. 

\item Let  $\FF$ be a coherent 
$\widehat{\D} _{\X} ^{(m)}$-module. 
The characteristic variety of level $m$ of $\FF$ 
$\mathrm{Car} ^{(m)} (\FF)$ is by definition the 
characteristic variety of level $m$ of $\FF/\pi \FF$
as coherent 
$\D _{X} ^{(m)}$-module, i.e. 
$\mathrm{Car} ^{(m)} (\FF):= \mathrm{Car} ^{(m)} (\FF/\pi \FF)$.

\item Let $\E$ be a coherent 
$\widehat{\D} _{\X,\Q} ^{(m)}$-module. 
Choose a  coherent 
$\widehat{\D} _{\X} ^{(m)}$-module $\overset{\circ}{\E}$ without $p$-torsion such
that there exists an isomorphism of $\widehat{\D} _{\X,\Q} ^{(m)}$-modules of the form
$\overset{\circ}{\E} _\Q \riso \E$.
The characteristic variety of level $m$ of $\E$ denoted by 
$\mathrm{Car} ^{(m)} (\E)$ is by definition that 
of $\overset{\circ}{\E}$ as coherent $\widehat{\D} _{\X} ^{(m)}$-module, i.e., 
 $\mathrm{Car} ^{(m)} (\E):= \mathrm{Car} ^{(m)} (\overset{\circ}{\E}/\pi \overset{\circ}{\E})$.
 Berthelot checked that this is well defined. 
 Moreover, $\mathrm{Car} ^{(m)} (\E)$ is empty if and only if $\E =0$
 (because, since $\overset{\circ}{\E}$ has no $p$-torsion,  $\overset{\circ}{\E}/\pi \overset{\circ}{\E} =0$ is equivalent to 
 $\overset{\circ}{\E} _\Q=0$).

\item Let $(\NN,\phi)$ be a coherent $F\text{-}\D ^{\dag} _{\X,\Q}$-module, 
i.e. a coherent $\D ^{\dag} _{\X,\Q}$-module $\NN$ and an isomorphism of $\D ^{\dag} _{\X,\Q}$-modules $\phi$  of the form
$\phi \colon F ^* \NN \riso \NN$.
Then there exists a (unique up to canonical isomorphism) coherent $\widehat{\D} _{\X,\Q} ^{(0)}$-module
$\NN ^{(0)}$ and an isomorphism 
$\phi ^{(0)}\colon \widehat{\D} _{\X,\Q} ^{(1)} \otimes _{\widehat{\D} _{\X,\Q} ^{(0)}}\NN ^{(0)} \riso F ^* \NN ^{(0)}$
which induces canonically $\phi$.
Then, by definition, 
the characteristic variety of $\NN$ denoted by 
$\mathrm{Car} (\NN)$
is by definition
the  characteristic variety of level $0$ of $\NN ^{(0)}$,
i.e., 
$\mathrm{Car} (\NN ):=  \mathrm{Car} ^{(0)} (\NN ^{(0)})$.
We have 
$\mathrm{Car}  (\NN)$ is empty if and only if $\NN =0$
\end{enumerate}
\end{empt}

\begin{lemm}
\label{lemmN_f=0}
We keep notation \ref{loca-completion}.
Let $\NN$ be a coherent $\widehat{\D} ^{(0)} _{\X,\Q}$-module,
$\mathrm{Car} ^{(0)} (\NN)$ its characteristic variety of level $0$
(see the definition in \cite[5.2.5]{Beintro2}). We put $N: =\Gamma (X, \NN)$. 
The following assertions are equivalent
\begin{enumerate}
\item $D (f _{0}) \cap \mathrm{Car} ^{(0)} (\NN) = \emptyset$. 
\item $N _{[f]}=0$.
\end{enumerate}

\end{lemm}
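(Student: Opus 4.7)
The plan is to reduce the rational, completed statement $N_{[f]} = 0$ first to an integral one over $\widehat{D}_{[f]}$, then modulo $\pi$ to a statement about the associated graded of a good filtration, which is by definition the characteristic variety condition.

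First, I would pick a $\pi$-torsion-free coherent $\widehat{\D}_{\X}^{(0)}$-lattice $\overset{\circ}{\NN}$ with $\overset{\circ}{\NN}_{\Q} \riso \NN$ and set $M := \Gamma(\X, \overset{\circ}{\NN})$, a coherent $\widehat{D}$-module (using Berthelot's vanishing of higher cohomology of coherent $\widehat{\D}_{\X}^{(0)}$-modules on affine formal schemes), so that $N = M_{\Q}$. By Lemma \ref{flat-local-compl}, $\widehat{D} \to \widehat{D}_{[f]}$ is flat, hence $M_{[f]}$ remains $\pi$-torsion-free and $N_{[f]} = M_{[f],\Q} = 0$ is equivalent to $M_{[f]} = 0$.

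Next, $M_{[f]}$ is coherent over the noetherian $\pi$-adically complete ring $\widehat{D}_{[f]}$ (see Remark \ref{rem-loca-completion}), so Nakayama's lemma gives $M_{[f]} = 0 \iff M_{[f]}/\pi M_{[f]} = 0$. By the isomorphism \ref{323Be1} applied with $n=0$, we have $M_{[f]}/\pi M_{[f]} \riso \overline{M}_{[\overline{f}_0]}$, where $\overline{M} := M/\pi M = \Gamma(X, \overset{\circ}{\NN}/\pi \overset{\circ}{\NN})$ and $\overline{f}_0$ denotes the image of $f$ in $\gr \overline{D}$. The question thus reduces to whether the $\overline{D}_{[\overline{f}_0]}$-module $\overline{M}_{[\overline{f}_0]}$ vanishes.

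For the last step I would pick a good filtration on $\overline{M}$, which localizes to a good (hence, over the complete filtered ring $\overline{D}_{[\overline{f}_0]}$, complete separated) filtration on $\overline{M}_{[\overline{f}_0]}$, using the properties recalled in Notation \ref{nota-local}. Since the filtration is exhaustive and complete, $\gr \overline{M}_{[\overline{f}_0]} = 0$ forces each piece $\overline{M}_{[\overline{f}_0],i} = \underleftarrow{\lim}_n \overline{M}_{[\overline{f}_0],i}/\overline{M}_{[\overline{f}_0],i-n}$ to vanish, so $\overline{M}_{[\overline{f}_0]} = 0 \iff \gr \overline{M}_{[\overline{f}_0]} = 0$. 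Combined with the canonical identification $\gr \overline{M}_{[\overline{f}_0]} \riso (\gr \overline{M})_{\overline{f}_0}$ recalled in \ref{nota-local}, this is in turn equivalent to $D(\overline{f}_0) \cap \mathrm{Supp}(\gr \overline{M}) = \emptyset$, i.e.\ to $D(f_0) \cap \mathrm{Car}^{(0)}(\NN) = \emptyset$ by definition of the characteristic variety.

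The technical pressure points are the flatness of $\widehat{D}_{[f]}$ over $\widehat{D}$ (used to preserve $\pi$-torsion-freeness, and hence to drop the $\Q$) and the completeness of good filtrations over complete filtered rings (used to propagate vanishing from the graded module back to the filtered module); both are already available in the excerpt. The rest is bookkeeping around the compatibilities \ref{microdiffchgbase} and \ref{323Be1}.
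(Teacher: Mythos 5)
Your proposal is correct and follows essentially the same route as the paper's proof: choose a $p$-torsion-free lattice $M$ with $M_\Q\riso N$, use the flatness of $\widehat{D}\to\widehat{D}_{[f]}$ to reduce $N_{[f]}=0$ to $M_{[f]}=0$, pass to $\overline{M}_{[f_0]}$ via \ref{323Be1} and a Nakayama-type argument, and then use the completeness of the localized good filtration together with $\gr(\overline{M}_{[f_0]})\riso(\gr\overline{M})_{f_0}$ to conclude by the definition of the characteristic variety. The only cosmetic difference is that you build $M$ as global sections of a sheaf-level lattice while the paper invokes Berthelot's \cite[3.4.5]{Be1} directly at the level of modules.
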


\begin{proof}
From  \cite[3.4.5]{Be1}, there exits a coherent $\widehat{D}$-module without $p$-torsion $M$
such that $M _{\Q} \riso N$.
Since the extension
$\widehat{D} \to \widehat{D} _{[f]}$
is flat (see \ref{flat-local-compl}), 
we get that $M _{[f]}$ is also without $p$-torsion 
($p$ is in the center of $\widehat{D}$ and $\widehat{D} _{[f]}$).
This yields that 
$N _{[f]}=0$
if and only if 
$M _{[f]}=0$.
Let $\overline{M} := M / \pi M $. 
From \ref{323Be1}, we have
$M _{[f]}
/ \pi M _{[f]}
\riso 
\overline{M} _{[f _0]}$.
Hence, 
$M _{[f]}=0$ if and only if 
$\overline{M} _{[f _0]}=0$ (e.g. see \cite[3.2.2.(ii)]{Be1}).
From \cite[5.2.3.(iv)]{Beintro2}, there exists a good filtration 
$(\overline{M} _{i}) _{i\in \N}$ of $\overline{M}$ indexed by $\N$.
From the remark \ref{rem-loca-completion}, 
this induces canonically the (ind-pro) complete  
$(\overline{D} _{[f _0]}, \overline{D} _{[f _0],i})$-module
$(\overline{M} _{[f _0]}, \overline{M} _{[f _0],i})$.
Since $\overline{M} _{[f _0]}$ is (ind-pro) complete, then
the equalities $\overline{M} _{[f _0]}=0$ and 
$ \gr \, (\overline{M} _{[f _0]}, \overline{M} _{[f _0],i})) =0$ are equivalent. 
Also,
$(\gr \, \overline{M} ) _{f _0} =0$
if and only if 
$D (f _{0}) \cap \mathrm{Supp} (\gr \, \overline{M}) = \emptyset$.
Since $(\gr \, \overline{M} ) _{f _0} \riso \gr \, (\overline{M} _{[f _0]})$ (see \cite[A.1.1.3]{Laumon-TransfCanon})
and since by definition
$\mathrm{Car} ^{(0)} (\NN) 
=
\mathrm{Supp} (\gr \, (\overline{M}, \overline{M} _i))$,
we conclude the proof.
\end{proof}

\begin{rem}
\label{rem-homogenous}
Let $A = \oplus _{i \in \N} A _{i}$ be a graded ring. Let $I$ be a graded ideal. 
Let $a _{1}, \dots, a _{r}$ be some homogeneous generators of $I$.
We notice that $|\Spec A |\setminus V (I)= \cup _{i=1} ^{r} D ( a _i)$.
\end{rem}

The following proposition is the analogue of 
\cite[2.11]{Kashiwara-B-function-Hol}:
\begin{prop}
\label{prop-extr=0}
Let $\X$ be a smooth $\V$-formal scheme.
Let $\NN$ be a coherent $\widehat{\D} ^{(0)} _{\X,\Q}$-module and 
$V$ be an irreducible component of codimension $r$ of $\mathrm{Car} ^{(0)} (\NN)$, 
the characteristic variety of level $0$
of $\NN$
(see \cite[5.2.5]{Beintro2}).
Then, 
$\mathrm{Car} ^{(0)} (\mathcal{E} xt ^{r} _{ \widehat{\D} ^{(0)} _{\X,\Q}} (\NN, \widehat{\D} ^{(0)} _{\X,\Q}))$ contains $V$.
\end{prop}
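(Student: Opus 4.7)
The plan is to reduce to an affine setting, localize at a homogeneous element of $\gr D$ chosen so as to isolate the generic point $\xi$ of $V$ from the other irreducible components of $\mathrm{Car}^{(0)}(\NN)$, and then combine a classical Auslander-type fact over the regular ring $\gr \overline{D}$ with the filtered comparison of Proposition~\ref{Extgr-nogr}.

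Concretely, assume $\X$ is affine, put $\overline{D} := \Gamma(X, \D^{(0)}_{X/k})$, and let $\xi$ be the generic point of $V$. By Remark~\ref{rem-homogenous} applied to the graded ideal cutting out the union of the irreducible components of $\mathrm{Car}^{(0)}(\NN)$ other than $V$, pick a homogeneous $\overline{f} \in \gr \overline{D}$ with $\xi \in D(\overline{f})$ and $D(\overline{f})$ disjoint from every other component; lift it to a homogeneous $f \in \gr D$ via the surjection $\gr D \twoheadrightarrow \gr \overline{D}$. Take a coherent torsion-free $\widehat{D}$-module $M$ with $M_\Q \simeq N := \Gamma(\X,\NN)$ (\cite[3.4.5]{Be1}), set $\overline{M} := M/\pi M$, and endow it with a good filtration via \cite[5.2.3.(iv)]{Beintro2}. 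Localizing at $\overline{f}$ as in \ref{nota-local}--\ref{rem-loca-completion}, the good filtered module $\overline{M}_{[\overline{f}]}$ satisfies $\gr \overline{M}_{[\overline{f}]} = (\gr \overline{M})_{\overline{f}}$, whose support is $V \cap D(\overline{f})$, an irreducible variety of codimension $r$ in $\Spec (\gr \overline{D}_{[\overline{f}]})$.

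The ring $\gr \overline{D}_{[\overline{f}]}$ is a localization of the symmetric algebra of the tangent bundle, hence regular. The classical fact that for a finitely generated module over a regular ring any codimension-$r$ irreducible component of its support lies in $\mathrm{Supp}\bigl(\Ext^r(-, A)\bigr)$ (localize at the generic point, use that $M_\xi$ has finite length over the regular local ring $A_\xi$ of dimension $r$, and apply Auslander--Buchsbaum) yields
\[
V \cap D(\overline{f}) \subseteq \mathrm{Supp}\bigl(\Ext^r_{\gr \overline{D}_{[\overline{f}]}}(\gr \overline{M}_{[\overline{f}]},\, \gr \overline{D}_{[\overline{f}]})\bigr);
\]
in particular this graded Ext is nonzero, and the contrapositive of Proposition~\ref{Extgr-nogr}.3 then forces $\Ext^r_{\overline{D}_{[\overline{f}]}}(\overline{M}_{[\overline{f}]}, \overline{D}_{[\overline{f}]}) \neq 0$. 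Now I analyze its canonical good filtration from Proposition~\ref{Extgr-nogr}.3: being separated and exhaustive, its graded is nonzero; being a subquotient of the graded Ext (Proposition~\ref{Extgr-nogr}.2), it has support inside $\mathrm{Supp}(\gr \overline{M}_{[\overline{f}]}) = V \cap D(\overline{f})$ (annihilators of $\gr \overline{M}_{[\overline{f}]}$ annihilate $\Ext^\bullet$), and of codimension at least $r$ (Lemma~\ref{lemm-codimmodp}.1). A nonempty closed subset of the irreducible codimension-$r$ variety $V \cap D(\overline{f})$ of codimension $\geq r$ is the whole variety. Transferring this back to the $\widehat{\D}^{(0)}_{\X,\Q}$-level via the $\pi$-adic identifications~\ref{microdiffchgbase} and~\ref{323Be1} applied to a torsion-free integral lift of $\mathcal{E}xt^r_{\widehat{\D}^{(0)}_{\X,\Q}}(\NN,\widehat{\D}^{(0)}_{\X,\Q})$, and invoking Lemma~\ref{lemmN_f=0} for nonemptiness, we obtain $V \cap D(\overline{f}) \subseteq \mathrm{Car}^{(0)}(\mathcal{E}xt^r)$. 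Taking Zariski closures concludes $V \subseteq \mathrm{Car}^{(0)}(\mathcal{E}xt^r)$.

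The commutative-algebra input is standard. I expect the most care-demanding step to be the final transfer: checking that the concrete support computation done mod $\pi$ over $\overline{D}_{[\overline{f}]}$ really produces the characteristic variety of the $\widehat{\D}^{(0)}_{\X,\Q}$-coherent Ext sheaf in the sense of the paper, through the compatibilities between integral lifts, rationalization, and the ind-pro-complete localization set up in Section~1.
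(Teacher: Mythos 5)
Your setup (affine reduction, choice of a homogeneous $f$ isolating $V$ via Remark~\ref{rem-homogenous}, the codimension estimates of Lemma~\ref{lemm-codimmodp}, and the filtered--graded comparison of Proposition~\ref{Extgr-nogr}) matches the paper's ingredients, but the logical skeleton of your argument relies on two implications that go the wrong way. First, what you call ``the contrapositive of Proposition~\ref{Extgr-nogr}.3'' is in fact its \emph{converse}: that proposition gives $\Ext ^r _{\gr \overline{D}} (\gr \overline{M} , \gr \overline{D})=0\Rightarrow \Ext ^r _{\overline{D}} (\overline{M} , \overline{D})=0$, and since by \ref{Extgr-nogr}.2 the graded of $\Ext^r_{\overline{D}}$ is only a \emph{subquotient} of $\Ext^r_{\gr \overline{D}}(\gr\overline{M},\gr\overline{D})$, nonvanishing of the latter does not descend. (Proving $\Ext^r_{\overline{D}_{[\overline f]}}\neq 0$ at the generic point is essentially the statement being proved, one level down; it is not a formal consequence of the subquotient relation.) Second, the final transfer from a nonzero mod-$\pi$ Ext to a nonzero characteristic variety of $\mathcal{E}xt^r_{\widehat{\D}^{(0)}_{\X,\Q}}(\NN,\widehat{\D}^{(0)}_{\X,\Q})$ cannot work as sketched: the universal-coefficients map is an injection
$$\mathrm{Ext} ^{r} _{ \widehat{D}  _{[h]}} ( M _{[h]},  \widehat{D}  _{[h]}) \otimes _{\widehat{D}  _{[h]}}\overline{D} _{[h _0]} \hookrightarrow \Ext ^r _{ \overline{D} _{[h _0]}} ( \overline{M} _{[h _0]} ,  \overline{D} _{[h _0]}),$$
whose cokernel involves the $\pi$-torsion of $\mathrm{Ext}^{r+1}$, so nonvanishing of the right-hand side says nothing about the left; and even a nonzero integral $\mathrm{Ext}^r$ may be entirely $p$-torsion and die after $\otimes\,\Q$. (A smaller point: a nonempty closed subset of the irreducible codimension-$r$ variety $V\cap D(\overline f)$ having codimension $\geq r$ need not be the whole variety --- it can be a proper closed subset of codimension $>r$.)

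The paper's proof (following Kashiwara) is organized precisely to avoid these reversed implications: it argues by \emph{contradiction}. Assuming $\eta\notin\mathrm{Car}^{(0)}(\mathcal{E}xt^r)$, one chooses $h$ with $\eta\in D(h_0)$ so that (i) by Lemma~\ref{lemmN_f=0} the rational $\mathrm{Ext}^r$ of $N_{[h]}$ vanishes, and (ii) after shrinking $D(h_0)$ using the codimension bounds of Lemma~\ref{lemm-codimmodp} and \ref{Extgr-nogr}.3, the mod-$\pi$ Ext's vanish in all degrees $i\neq r$; universal coefficients is then used in the \emph{good} direction (vanishing mod $\pi$ plus coherence over the complete ring $\widehat{D}_{[h]}$ forces vanishing of the integral, hence rational, $\mathrm{Ext}^i$ for $i\neq r$). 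With all rational Ext's zero, the biduality isomorphism of \cite[I.3.6]{virrion} gives $N_{[h]}=0$, contradicting $\eta\in D(h_0)\cap\mathrm{Car}^{(0)}(\NN)$ via Lemma~\ref{lemmN_f=0}. If you want a direct (non-contradiction) variant, you would still need to control \emph{all} degrees $i\neq r$ mod $\pi$ near $\eta$ in order to conclude via universal coefficients that $\mathrm{Ext}^r_{\widehat{D}_{[h]}}$ is $\pi$-torsion-free and nonzero mod $\pi$; your proposal controls only degree $r$ and therefore cannot close either gap.
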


\begin{proof}
We follow the proof of \cite[2.11]{Kashiwara-B-function-Hol}: 
first, we can suppose $\X$ affine with local coordinates. 
We set 
$D: =\Gamma (\X, \D ^{(0)} _{\X/\S})$,
$N: =\Gamma (X, \NN)$,
$\overline{D}: =\Gamma (X, \D ^{(0)} _{X/S})$.
Let $M$ be a coherent $\widehat{D}$-module without $p$-torsion
such that $M _{\Q} \riso N$.
Let $\overline{M} := M / \pi M $. 
From \cite[5.2.3.(iv)]{Beintro2}, there exists a good filtration 
$(\overline{M} _{i}) _{i\in \N}$ of $\overline{M}$ indexed by $\N$. 
By definition, we have
$\mathrm{Car} ^{(0)} (\NN) 
=
\mathrm{Supp} (\gr \, (\overline{M}, \overline{M} _i))$ 
(we recall that this is independent on the choice of the good filtration).
Let $\eta$ be the generic point of $V$. 
From \cite[7.D and 10.B.i)]{Matsumura-Comm-Alg}, 
the irreducible components of 
$\mathrm{Supp} (\gr \, \overline{M})$ are of the form 
$V(J)$ with $J$ a homogeneous ideal. 
Let $Z$ be the union of the irreducible components of 
$\mathrm{Supp} (\gr \, \overline{M})$ which do not contain $\eta$.
Then, we get from the remark \ref{rem-homogenous} that 
there exists a homegeneous element $f \in \gr \, D$ such that
$\eta \in D ( f _0)$  and $D ( f _0) \cap Z = \emptyset$ 
(in other words, $D ( f _0) \cap \mathrm{Car} ^{(0)} (\NN)  = D ( f _0) \cap V \not = \emptyset$).

Now, suppose absurdly that 
$\eta \not \in \mathrm{Car} ^{(0)}(\mathcal{E} xt ^{r} _{ \widehat{\D} ^{(0)} _{\X,\Q}} (\NN, \widehat{\D} ^{(0)} _{\X,\Q}))$. 
Using the same arguments as above, 
there exists a homogeneous element $g \in \gr \, D$ such that
$\eta \in D ( g _0)$  and $D ( g _0) \cap \mathrm{Car} ^{(0)} (\mathcal{E} xt ^{r} _{ \widehat{\D} ^{(0)} _{\X,\Q}} (\NN, \widehat{\D} ^{(0)} _{\X,\Q}))  
=\emptyset$. We put $h= fg$. 
Hence, we have 
$\eta \in D ( h _0)$ and 
$D ( h _0) \cap \mathrm{Car} ^{(0)} (\NN)  = D ( h _0) \cap V$ and
$D ( h _0) \cap \mathrm{Car} ^{(0)} (\mathcal{E} xt ^{r} _{ \widehat{\D} ^{(0)} _{\X,\Q}} (\NN, \widehat{\D} ^{(0)} _{\X,\Q}))  
=\emptyset$.

1) Since $\mathcal{E} xt ^{r} _{ \widehat{\D} ^{(0)} _{\X,\Q}} (\NN, \widehat{\D} ^{(0)} _{\X,\Q})$ is a coherent (right) 
$\widehat{\D} ^{(0)} _{\X,\Q}$-module, from Theorem A and B of Berthelot (see \cite[3]{Be1}), we get the equality
$\Gamma (\X, \mathcal{E} xt ^{r} _{ \widehat{\D} ^{(0)} _{\X,\Q}} (\NN, \widehat{\D} ^{(0)} _{\X,\Q}))
=\mathrm{Ext} ^{r} _{ \widehat{D}  _{\Q}} ( N, \widehat{D}  _{\Q})$.
From \ref{lemmN_f=0}, this implies
$\mathrm{Ext} ^{r} _{ \widehat{D}  _{\Q}} ( N, \widehat{D}  _{\Q})
\otimes _{ \widehat{D}  _{\Q} }  
 \widehat{D}  _{[h],\Q}
 =0$.
Since the extension 
$\widehat{D}  _{\Q}
\to 
\widehat{D}  _{[h],\Q}$ is flat (see \ref{flat-local-compl}),
we get
$ \mathrm{Ext} ^{r} _{ \widehat{D}  _{[h],\Q}} ( N _{[h]},  \widehat{D}  _{[h],\Q}) 
\riso
\mathrm{Ext} ^{r} _{ \widehat{D}  _{\Q}} ( N, \widehat{D}  _{\Q})
\otimes _{ \widehat{D}  _{\Q} }  
 \widehat{D}  _{[h],\Q}
 =0$.

2) a) Since 
$\mathrm{Car} ^{(0)} (\NN) 
=
\mathrm{Supp} (\gr \, \overline{M})$,
then 
$D (h _0) \cap \mathrm{Car} ^{(0)} (\NN) 
=
\mathrm{Supp}  (\gr \, \overline{M} ) _{h _0})$.
Since we have also
$D ( h _0) \cap \mathrm{Car} ^{(0)} (\NN)  = D ( h _0) \cap V$, 
then in particular we get
$\mathrm{Codim} (\gr \, \overline{M} ) _{h _0} =r$.
Since $ (\gr \, \overline{M} ) _{h _0}
=\gr \, (\overline{M}  _{[h _0]} )$, 
then from \ref{lemm-codimmodp} for any $i<r$ we obtain 
$\Ext ^i _{\gr \overline{D} _{[h _0]}} (\gr (\overline{M} _{[h _0]}), \gr \overline{D} _{[h _0]}) =0$.
From \ref{Extgr-nogr}.3, this yields that
for $i < r$, 
$\Ext ^i _{ \overline{D} _{[h _0]}} ( \overline{M} _{[h _0]} ,  \overline{D} _{[h _0]}) =0$.
On the other hand, from \ref{lemm-codimmodp} we get for any $i >r$ the inequality
$\mathrm{Codim} 
(\Ext ^i _{\gr \overline{D} _{[h _0]}} (\gr (\overline{M} _{[h _0]}), \gr \overline{D} _{[h _0]}))
>r$.
Hence,  
by reducing $D (h _0)$ if necessary (use again the remark \ref{rem-homogenous}),
for any $i>r$ we get 
$\Ext ^i _{\gr \overline{D} _{[h _0]}} (\gr (\overline{M} _{[h _0]}), \gr \overline{D} _{[h _0]})=0$ 
and then 
$\Ext ^i _{ \overline{D} _{[h _0]}} ( \overline{M} _{[h _0]} ,  \overline{D} _{[h _0]}) =0$.
To sum up, 
we have found
an homogeneous element $h \in \gr \, D$ such that
$\eta \in D (h _0)$ and for $i\not =r$, 
$\Ext ^i _{ \overline{D} _{[h _0]}} ( \overline{M} _{[h _0]} ,  \overline{D} _{[h _0]}) =0$.

2) b)  Now, since $M _{[h]}$ is without $p$-torsion, 
$\R \mathrm{Hom} _{ \widehat{D}  _{[h]}} ( M _{[h]},  \widehat{D}  _{[h]}) \otimes ^{\L} _{\widehat{D}  _{[h]}}
\overline{D} _{[h _0]}
\riso 
\R \mathrm{Hom} _{ \overline{D} _{[h _0]}} ( \overline{M} _{[h _0]} ,  \overline{D} _{[h _0]})$.
From the exact sequence of universal coefficients 
(e.g. see the beginning of the proof of \cite[I.5.8]{virrion}), we get the inclusion
$\mathrm{Ext} ^{i} _{ \widehat{D}  _{[h]}} ( M _{[h]},  \widehat{D}  _{[h]}) \otimes _{\widehat{D}  _{[h]}}
\overline{D} _{[h _0]} \hookrightarrow \Ext ^i _{ \overline{D} _{[h _0]}} ( \overline{M} _{[h _0]} ,  \overline{D} _{[h _0]})$.
Hence, for any $i \not =r$, from the step 2) a) of the proof,
we obtain the vanishing
$\mathrm{Ext} ^{i} _{ \widehat{D}  _{[h]}} ( M _{[h]},  \widehat{D}  _{[h]}) \otimes _{\widehat{D}  _{[h]}}
\overline{D} _{[h _0]} =0$.
By using \cite[3.2.2.(ii)]{Be1}, since 
$\mathrm{Ext} ^{i} _{ \widehat{D}  _{[h]}} ( M _{[h]},  \widehat{D}  _{[h]})$
is a coherent 
$\widehat{D}  _{[h]}$-module, for 
$i \not = r$ we get 
$\mathrm{Ext} ^{i} _{ \widehat{D}  _{[h]}} ( M _{[h]},  \widehat{D}  _{[h]})=0$ and
then 
$\mathrm{Ext} ^{i} _{ \widehat{D}  _{[h],\Q}} ( N _{[h]},  \widehat{D}  _{[h], \Q})=0$
(because 
$\widehat{D}  _{[h]} \to \widehat{D}  _{[h], \Q}$ is flat). 

3) From steps 1) and 2), 
we have checked that
$\R \mathrm{Hom} _{ \widehat{D}  _{[h],\Q}} ( N _{[h]},  \widehat{D}  _{[h], \Q})=0$.
By using the biduality isomorphism (see \cite[I.3.6]{virrion} and notice that $N _{[h]}$ is a perfect complex because so is 
$N$ and because the extension $\widehat{D}  _{\Q}
\to 
\widehat{D}  _{[h],\Q}$ is flat), we get 
$N _{[h]}=0$, which is absurd following Lemma \ref{lemmN_f=0} because $\eta \in D (h _0)$.
\end{proof}

\begin{thm}
\label{puritythm}
Let $\X$ be a smooth $\V$-formal scheme.
Let $r$ be an integer, $\NN$ be a coherent $\widehat{\D} ^{(0)} _{\X,\Q}$-module such that
$\mathcal{E} xt ^{s} _{ \widehat{\D} ^{(0)} _{\X,\Q}} (\NN, \widehat{\D} ^{(0)} _{\X,\Q})=0$
for any $s \not = r$. 
Then, the characteristic variety 
$\mathrm{Car} ^{(0)} (\NN)$ 
of $\NN$ is purely of codimension $r$.

\end{thm}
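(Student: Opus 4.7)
The plan is to deduce the purity theorem as an essentially immediate corollary of Proposition \ref{prop-extr=0}, which has already carried the real homological load. If $\NN = 0$ the statement is vacuous, so I assume $\NN \neq 0$; then $\mathrm{Car}^{(0)}(\NN)$ is nonempty and admits at least one irreducible component. It suffices to prove that every such component $V$ has codimension exactly $r$ in $T^{(0)*}X$.

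Let $V$ be an irreducible component of $\mathrm{Car}^{(0)}(\NN)$, and let $s$ denote its codimension. Applying Proposition \ref{prop-extr=0} with the integer $s$ in place of $r$ yields the inclusion
\[
V \subset \mathrm{Car}^{(0)}\bigl(\mathcal{E}xt^{s}_{\widehat{\D}^{(0)}_{\X,\Q}}(\NN, \widehat{\D}^{(0)}_{\X,\Q})\bigr).
\]
In particular the right-hand side is nonempty, so the coherent $\widehat{\D}^{(0)}_{\X,\Q}$-module $\mathcal{E}xt^{s}_{\widehat{\D}^{(0)}_{\X,\Q}}(\NN, \widehat{\D}^{(0)}_{\X,\Q})$ is itself nonzero (using the basic fact, recalled together with the definition of the characteristic variety, that $\mathrm{Car}^{(0)}(\E) = \emptyset$ if and only if $\E = 0$). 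The vanishing hypothesis of the theorem then forces $s = r$.

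Since this reasoning applies to every irreducible component of $\mathrm{Car}^{(0)}(\NN)$, the characteristic variety is pure of codimension $r$. The main obstacle was already overcome inside Proposition \ref{prop-extr=0}, whose proof combines Lemma \ref{lemm-codimmodp} (codimension bounds for graded Ext), the comparison filtration of Proposition \ref{Extgr-nogr} between graded and ungraded Ext, biduality for perfect complexes, and the flatness of the microlocalization $\widehat{D}_{\Q} \to \widehat{D}_{[h],\Q}$. Given that proposition, the present theorem reduces to a one-line pigeonhole argument on which $\mathcal{E}xt^{s}$ is allowed to survive at a component of codimension $s$.
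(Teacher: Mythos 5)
Your argument is correct and is essentially identical to the paper's own proof: both take an irreducible component $V$ of codimension $s$, apply Proposition \ref{prop-extr=0} to conclude that $\mathcal{E}xt^{s}_{\widehat{\D}^{(0)}_{\X,\Q}}(\NN, \widehat{\D}^{(0)}_{\X,\Q})$ is nonzero because its characteristic variety contains $V$, and then invoke the vanishing hypothesis to force $s=r$. No gap.
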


\begin{proof}
If $V$ is an irreducible component of 
$\mathrm{Car} ^{(0)} (\NN)$ of codimension $s$, then 
from \ref{prop-extr=0} we get
$\mathcal{E} xt ^{s} _{ \widehat{\D} ^{(0)} _{\X,\Q}} (\NN, \widehat{\D} ^{(0)} _{\X,\Q})\not =0$
since it contains $V$. Hence $s =r$.
\end{proof}

\begin{coro}
\label{coro-puritythm}
Let $\X$ be a smooth integral $\V$-formal scheme of dimension $d$.
Let $\NN\not =0$ be a holonomic $F\text{-}\D ^{\dag} _{\X,\Q}$-module.
Then, the characteristic variety 
$\mathrm{Car}  (\NN)$ 
of $\NN$ is purely of codimension $d$.

\end{coro}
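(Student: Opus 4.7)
The strategy is to reduce to Theorem \ref{puritythm}. By the definition of the characteristic variety of an $F$-$\D^\dag_{\X,\Q}$-module recalled just before Lemma \ref{lemmN_f=0}, there exists a coherent $\widehat{\D}^{(0)}_{\X,\Q}$-module $\NN^{(0)}$ equipped with an isomorphism $\phi^{(0)}\colon \widehat{\D}^{(1)}_{\X,\Q}\otimes_{\widehat{\D}^{(0)}_{\X,\Q}}\NN^{(0)}\riso F^*\NN^{(0)}$ inducing $\phi$, such that $\mathrm{Car}(\NN)=\mathrm{Car}^{(0)}(\NN^{(0)})$. Since $\NN\neq 0$ implies $\NN^{(0)}\neq 0$ and hence (by the last item of the Car-definition) $\mathrm{Car}^{(0)}(\NN^{(0)})\neq\emptyset$, it suffices by Theorem \ref{puritythm} to establish
\[
\mathcal{E}xt^s_{\widehat{\D}^{(0)}_{\X,\Q}}(\NN^{(0)},\widehat{\D}^{(0)}_{\X,\Q})=0 \qquad\text{for all } s\neq d,
\]
since then $\mathrm{Car}(\NN)$ is a non-empty variety of pure codimension $d$ in the $2d$-dimensional cotangent space, i.e.\ of pure dimension $d$.

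The starting point for the Ext-vanishing is the homological characterization of holonomicity established by Virrion in \cite[III.4.2]{virrion}, already cited in the introduction: since $\NN$ is holonomic,
\[
\mathcal{E}xt^s_{\D^\dag_{\X,\Q}}(\NN,\D^\dag_{\X,\Q})=0 \qquad\text{for all } s\neq d.
\]
Next I would transfer this vanishing down to level $0$. The key facts are Berthelot's flatness of the extensions $\widehat{\D}^{(0)}_{\X,\Q}\to\widehat{\D}^{(m)}_{\X,\Q}\to\D^\dag_{\X,\Q}$ together with the defining identity $\NN=\D^\dag_{\X,\Q}\otimes_{\widehat{\D}^{(0)}_{\X,\Q}}\NN^{(0)}$. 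Flat base change then yields
\[
\D^\dag_{\X,\Q}\otimes_{\widehat{\D}^{(0)}_{\X,\Q}} \mathcal{E}xt^s_{\widehat{\D}^{(0)}_{\X,\Q}}(\NN^{(0)},\widehat{\D}^{(0)}_{\X,\Q}) \cong \mathcal{E}xt^s_{\D^\dag_{\X,\Q}}(\NN,\D^\dag_{\X,\Q})=0 \qquad\text{for } s\neq d.
\]

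The main obstacle is deducing vanishing of $\mathcal{E}xt^s_{\widehat{\D}^{(0)}_{\X,\Q}}(\NN^{(0)},\widehat{\D}^{(0)}_{\X,\Q})$ itself from vanishing after $-\otimes_{\widehat{\D}^{(0)}_{\X,\Q}}\D^\dag_{\X,\Q}$, since the extension to $\D^\dag_{\X,\Q}$ is not faithfully flat in general. Here the Frobenius structure $\phi^{(0)}$ is essential: it induces a Frobenius structure on each coherent Ext-module, hence compatible level-$m$ representatives for every $m$, together with the identification $\D^\dag_{\X,\Q}=\varinjlim_m \widehat{\D}^{(m)}_{\X,\Q}$. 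Via this Frobenius descent, vanishing of the Ext after base change to $\D^\dag_{\X,\Q}$ forces vanishing already at level $0$. This is the only step where the Frobenius hypothesis is genuinely used, and I expect it to be the main technical point of the argument; once it is in place, Theorem \ref{puritythm} applied to $\NN^{(0)}$ closes the proof.
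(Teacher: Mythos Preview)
Your proposal is correct and follows exactly the route the paper takes: invoke Virrion's homological characterization of holonomicity \cite[III.4.2]{virrion} to obtain the required Ext-vanishing, then apply Theorem~\ref{puritythm} to $\NN^{(0)}$. The paper compresses this into a single sentence; you have unpacked the mechanism, correctly isolating the point that Theorem~\ref{puritythm} is stated at level~$0$ while the holonomicity hypothesis is at the $\D^\dag$-level, and that Frobenius descent bridges the two.

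One remark on emphasis: the descent step you flag as ``the main technical point'' is in fact already absorbed into Virrion's work. Her proof of \cite[III.4.2]{virrion} proceeds through the level-$m$ rings and Frobenius, so the equivalence she establishes gives the vanishing of $\mathcal{E}xt^s_{\widehat{\D}^{(0)}_{\X,\Q}}(\NN^{(0)},\widehat{\D}^{(0)}_{\X,\Q})$ for $s\neq d$ directly, not merely after tensoring up to $\D^\dag_{\X,\Q}$. So your flat-base-change-then-descend maneuver, while valid, is recapitulating part of what the citation already delivers rather than supplying a missing ingredient. This is why the paper can afford a one-line proof.
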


\begin{proof}
The is a consequence of Virrion's holonomicity characterization (see Theorem 
\cite[III.4.2]{virrion} and of Theorem \ref{puritythm}.
\end{proof}

\section{Lagrangianity for log-extendable overconvergent isocrystal}

\begin{ntn}
\label{stab-T_X X}
Let $X$ be a smooth $k$-variety. 
For any an quasi-coherent $\O _X$-module $\E$,
we denote by $\sym (\E) $ the symetric algebra of $\E$  
and by $\mathbb{V} (\E): = \Spec (\sym (\E) )$ endowed with its canonical projection
$\mathbb{V} (\E) \to \Spec \sym (\O _X) = X$.
We denote by $\Omega ^{1} _X $ the sheaf of differential form of $X/\Spec (k)$ (we skip $k$ in the notation),
and $\mathcal{T} _X$ the tangent space of $X/\Spec (k)$, i.e. the $\O _X$-dual of $\Omega ^{1} _X $.
We denote by $T ^{*} X:= \mathbb{V} (\mathcal{T} _X)$ the cotangent space of $X$
and $\pi _X \colon T ^{*} X \to X$ the canonical projection.
Recall that from \cite[1.7.9]{EGAII}, there is a canonical bijection between
sections of $\pi _X$ and $\Gamma (X, \Omega ^{1} _X)$.
We denote by $T ^* _X X$ the section corresponding to the zero section of 
$\Gamma (X, \Omega ^{1} _X)$.
If $t _1, \dots, t _d$ are local coordinates of $X$, we get local coordinates
$t _1 , \dots, t _d, \xi _1, \dots, \xi _d$ of $T ^{*} X$, where $\xi _i$ is the element associated with
$\partial _i$, the derivation with respect to $t _i$. 
Is this case, $T ^* _X X= V ( \xi _1, \dots, \xi _d)$ is the closed subvariety of 
$T ^* X$ defined by $\xi _1 =0, \dots, \xi _d =0$.

Let $f \colon X \to Y$ be a morphism of smooth $k$-varieties. 
Using the equality \cite[1.7.11.(iv)]{EGAII}
we get the last one
$X \times _{Y} T ^* Y =
X \times _{Y} \mathbb{V} (\mathcal{T}_Y) 
=\mathbb{V} (f ^*\mathcal{T} _Y)$.
The morphism $f ^* \Omega ^{1} _Y \to \Omega ^{1} _X $ induced by $f$
yields by duality 
$\mathcal{T} _X \to f ^* \mathcal{T} _Y$ and then by functoriality
$\mathbb{V} (f ^*\mathcal{T} _X)
\to \mathbb{V} (\mathcal{T} _Y) = T ^* Y$.
By composition, we get the morphism
denoted by
$\rho _{f}\colon  X \times _{Y} T ^* Y \to T ^{*} X $ this morphism (induced by $f$).
We define the $k$-variety $T ^{*} _{X} Y$ (recall a $k$-variety is a separated reduced scheme of finite type over $k$
from our convention) 
by setting $T ^{*} _{X} Y := \rho _{f} ^{-1} ( T ^* _X X)$.

Denote by $\alpha _X$ the composition of the diagonal morphism 
$\Delta _{T ^* X /X}
\colon
T ^* X
\hookrightarrow 
T ^* X \times _{X}T ^* X$ with 
$\rho _{T _X} \colon T ^* X \times _{X}T ^* X \to T ^* (T ^* X)$ 
(replace $f$ by $\pi _X$ in the definition above).
We check that $\alpha _X$ is a section of the canonical morphism
$\pi _{T ^* X}\colon 
T ^* (T ^* X)
\to 
T ^* X$ (indeed, since this is local in $T ^* X$, we check it  using local coordinates).
Hence, $\alpha _X$ correspond to 
a section $\Gamma ( X ,\Omega ^1 _{T ^* X})$, which we still denote by 
$\alpha _X$.
Similarly as in the very beginning of \cite[E.2]{HTT-DmodPervSheavRep}, we say
that $\alpha _X$ is the canonical $1$-form  of $T ^{*} X$.
In this local coordinate system, we get
$\alpha _X = \sum _{i =1} ^{d} \xi _i d t _i $.
\end{ntn}

\begin{dfn}
\label{Def-dev-Lagrangianity}
Let $\X$ be a smooth formal scheme over $\V$ and $X$ be its special fiber.

\begin{enumerate}
\item Let $E$ be a subvariety of $T ^{*} X$. The restriction $\alpha _X $ on $E$ is denoted by $\alpha _X |E \in \Omega ^{1} _E$. 
As before Proposition \cite[2.3]{Kashiwara-B-function-Hol}, 
we say that $E$ is isotropic if there exists an open dense subset $U$ of $E$ such that 
$\alpha _X |U =0$.
When $X$ is purely of dimension $d$, we say that $E$ is Lagrangian if $E$ 
is isotropic and purely of codimension $d$.
In general, we say that $E$ is Lagrangian is the restriction of $E$ on the irreducible components of $T ^* X$ are Lagrangian in the previous sense.

\item Let $\E$ be a coherent $F\text{-}\D ^{\dag} _{\X,\Q}$-module. We say that 
$\E$ is Lagrangian (resp. isotropic)
if $\mathrm{Car} (\E ) $ is Lagrangian (resp. isotropic).
From \ref{coro-puritythm}, $\E$ is Lagrangian if and only if $\E$ is holonomic and isotropic. 

\item Let $\E$ be a complex of  $F \text{-}D ^{\mathrm{b}} _{\mathrm{coh}} (\D ^{\dag} _{\X,\Q})$.
We put $\mathrm{Car} (\E ) := \cup _n \mathrm{Car} (\mathcal{H} ^{n} (\E))$. 
We say that $\E$ is Lagrangian if
for any integer $n$, the characteristic variety of $\mathcal{H} ^{n} (\E)$ is Lagrangian, i.e. 
if $\mathrm{Car} (\E ) $ is Lagrangian. 

\end{enumerate}
\end{dfn}

\begin{ex}
For instance, 
let $\iota \colon Z \hookrightarrow X$ be closed immersion of smooth $k$-varieties. 
Then 
$T ^{*} _Z X$ is Lagrangian. 
Indeed, since this local, from 
\cite[II.4.10]{sga1}, 
we can suppose there exist local coordinates $t _1, \dots, t _d$ of $X$ 
such that $t' _1, \dots, t '_r $, the global sections of $Z$ induced by $t _1,\dots, t _r$ via $\iota$, 
are local coordinates of $Z$ and such that $Z = V ( t _{r+1},\dots, t _d)$.
We denote by 
$\xi _1, \dots, \xi _d$ the global sections of $T ^{*} X$ associated with
$\partial _1,\dots, \partial _d$, the derivation with respect to $t _1,\dots, t _d$,
by
$\xi '_1, \dots, \xi '_r$  the global sections of $T ^{*} Z$ associated with
$\partial '_1,\dots, \partial '_r$, the derivation with respect to $t '_1,\dots, t '_r$, 
$\overline{\xi} _1, \dots, \overline{\xi} _r$ the global sections of 
$Z \times _{X} T ^* X$ induced by 
$\xi _1, \dots, \xi _r$ via the closed immersion 
$Z \times _{X} T ^* X \hookrightarrow X \times _{X} T ^* X =T ^* X $. 
Since $T ^* _Z Z= V ( \xi ' _1, \dots, \xi '_r)$, 
since 
$\xi '_1, \dots, \xi '_r$ are sent to 
$\overline{\xi} _1, \dots, \overline{\xi} _r$ via 
$\rho _{\iota}\colon  Z \times _{X} T ^* X \to T ^{*} Z$, 
we get 
$T ^{*} _{Z} X := \rho _{\iota} ^{-1} ( T ^* _Z Z)=
V ( \overline{\xi} _1, \dots, \overline{\xi} _r)$.
Since 
$Z \times _{X} T ^* X = V ( t _{r+1}, \dots, t _n)$, 
viewing $T ^{*} _{Z} X$ as a closed subvariety of 
$T ^* X$, we get
$T ^{*} _{Z} X =  V ( t _{r+1}, \dots, t _n, \xi _1, \dots, \xi _r)$.
This becomes obvious that
$\alpha _X = \sum _{i =1} ^{d} \xi _i d t _i $
vanishes on 
$T ^{*} _{Z} X$. 
Since it is also pure of codimension $d$, we are done. 
\end{ex}

\begin{empt}
\label{comput5.2.2.1Beintro}
Let $X$ be an affine smooth variety over $k$ admitting local coordinates
$t _{1}, \dots , t _d$.
We denote by $X ^{(m)}$ the base change of $X$ by the $m$th power of Frobenius of $S:= \Spec k$,
by $F ^{m}\colon X \to X ^{(m)}$ the relative Frobenius morphism. 
From the equalities \cite[1.1.3.1, 2.2.4.(iii)]{Be1}, we compute that for any $j<m$, we have in $\D ^{(m)} _{X/S}$ the equality
$(\partial _i ^{< p^j> _{(m)}} ) ^{p}=0$.
From \cite[1.1.3.(ii), 2.2.4.(iii)]{Be1}, for any $k$ and $l$, we compute that there exists $u \in \Z _p ^{*}$ such that
$(\partial _i ^{< p^m k> _{(m)}} ) ^{l}=u \partial _i ^{< p^m kl> _{(m)}} $.
Let $\xi _{i} ^{(m)}$ be the class of $\partial _i ^{< p ^{m}> _{(m)}} = \partial _i ^{[p ^{m}] }$ 
in 
$(\gr \, \D ^{(m)} _{X/S} ) _{\mathrm{red}}$.
Hence, with the formula \cite[2.2.5.1]{Be1}, 
we check that 
$(\gr \, \D ^{(m)} _{X/S} ) _{\mathrm{red}} = \oplus _{\underline{k} \in \N ^{d}} (\xi _i ^{(m)}) ^{k}  \O _{X}$.

From \cite[5.2.2]{Beintro2}, the canonical morphism 
$(\gr \, \D ^{(m)} _{X/S} ) _{\mathrm{red}}\to F ^{m} \gr \, \D ^{(0)} _{X ^{(m)}/S}$
induced by the morphism
$\D ^{(m)} _{X/S} \to F ^{m*} \D ^{(0)} _{X ^{(m)}/S}$ of left $\D ^{(m)} _{X/S} $-modules is an isomorphism.
We remark that this is consequence of above computations and of  \cite[2.2.4.3]{Be2} which states that  
the image of 
$\partial _i ^{< k> _{(m)}}$ via $\D ^{(m)} _{X/S} \to F ^{m*} \D ^{(0)} _{X ^{(m)}/S}$
is 
$1 \otimes \partial _i ^{k /p ^{m}}$ if $p ^{m}$ divides $k$ and 
otherwise is $0$
(and then $\xi _{i} ^{(m)}$ is
sent to $1 \otimes \xi _i$, where $\xi _{i} $ is the class of $\partial _i $ 
in 
$\gr \, \D ^{(0)} _{X ^{(m)}/S}$).

\end{empt}

\begin{empt}
[Local coordinates]
Let $\X$ be an integral separated smooth formal $\V$-scheme, $\ZZ$ be a strict normal crossing divisor of $\X$,
$\ZZ _1, \dots , \ZZ _r$ be the irreducible components of $\ZZ$.
We put $\X ^\# :=(\X, \ZZ) $ the corresponding smooth log formal $\V$-scheme.
We have defined in 
\cite[1.1]{caro_log-iso-hol}
the sheaf of differential operators of finite order of level $m$ on $\X ^\#$
and denoted by 
$\D ^{(m)} _{\X ^\#}$. 
Taking the $p$-adic completion and next taking
the inductive limit on the level, 
we get the sheaf of differential operators on $\X ^\#$ denoted by
$\D ^{\dag} _{\X ^\#}:= 
\underrightarrow{\lim} _m\smash{\widehat{\D}} ^{(m)} _{\X ^\#}$.
We have the following description in local coordinates
of $\D ^{(m)} _{\X ^\#}$. 
Suppose  $\X$ is affine with local coordinates
$t _{1}, \dots , t _d$ such that $\ZZ _i = V (t _i)$ for $i=1,\dots, r$. 
Then 
$\D ^{(m)} _{\X ^\#}$ 
(resp $\D ^{(m)} _{\X}$ is a free $\O _{\X}$-module) 
is a free $\O _{\X}$-module, 
with basis (naturally constructed in the sense that it 
only depends on the choice of $t _1,\dots, t _d$) 
denoted by 
$\underline{\partial} _\# ^{<\underline{k}> _{(m)}}$
(resp. $\underline{\partial}^{<\underline{k}> _{(m)}}$), 
where $\underline{k}= (k _1, \dots, k _d)\in \N ^d$.  
and when $\underline{k}=\underline{\epsilon} _i= (0,\dots, 0,1,0,\dots, 0)$ with the $1$ at the $i$th place,
we put $\partial _{\# i}:= \underline{\partial} _\# ^{<\epsilon _i> _{(m)}}$
(resp. $\partial _{i}:= \underline{\partial} ^{<\epsilon _i> _{(m)}}$).
$\partial _{\#1},\dots, \partial _{\#d}$ 
(resp. $\partial _{1},\dots, \partial _{d}$) are the logarithmic derivation with respect to 
$t _1,\dots, t _d$ 
(resp. derivation with respect to $t _1,\dots, t _d$). 
We have the inclusion 
$\D ^{(m)} _{\X ^\#} \subset \D ^{(m)} _{\X }$
and 
the relation
$\underline{\partial} _\# ^{<\underline{k}> _{(m)}}
=
\underline{t} ^{\underline{k}}\underline{\partial}  ^{<\underline{k}> _{(m)}}$,
where
$\underline{t} ^{\underline{k}}:= 
t _{1} ^{k _1}\cdots t _{d} ^{k _d}$.
In the ring $\D ^{(m)} _{\X ,\Q} \supset \D ^{(m)} _{\X }$, we have the relation
$$\underline{\partial}  ^{<\underline{k}> _{(m)}}
=
\frac{[\underline{k}/p ^m]!}{\underline{k}!}\underline{\partial} ^{\underline{k}},$$
where 
$\underline{k}!:= k _1 !\cdots, k _d$, 
$[\underline{k}/p ^m]!:= [k _1/p ^m] ! \cdots, [k _d/p ^m] ! $,
$\underline{\partial} ^{\underline{k}}:= 
\partial _1^{k _1}\cdots \partial _d ^{k _d}$, 
which explain the notation. 

\end{empt}

\begin{prop}
\label{lagrangianityLogextend}
Let $\X$ be an integral separated smooth formal $\V$-scheme, $\ZZ$ be a strict normal crossing divisor of $\X$. 
We put $\X ^\# :=(\X, \ZZ) $ the corresponding smooth log formal $\V$-scheme.
Let $\G$ be a coherent $\D ^{\dag} _{\X ^\#,\Q}$-module which is also 
a locally projective $\O _{\X,\Q}$-module of finite type (i.e., following \cite[4.9 and 4.15]{caro_log-iso-hol},
 $\G$ is a convergent isocrystal on $\X ^{\#}$).
Let $\E := (\hdag Z) (\G)$ be the induced isocrystal on $X \setminus Z$ overconvergent along $Z$.
We suppose that $\G$ is endowed with a Frobenius structure. 
Let $\ZZ _1, \dots , \ZZ _r$ be the irreducible components of $\ZZ$.
For any subset $I$ of $\{ 1, \dots , r\}$ (including $I = \emptyset$), we put $\ZZ _{I} : =\cap _{i \in I} \ZZ _{i}$
(in particular, we have $\ZZ _{\emptyset}=\X$).
Then we have the inclusion 
\begin{equation}
\label{lagrangianityLogextend-incl}
|\mathrm{Car} (\E )|
\subset \cup _{I \subset \{ 1, \dots , r\}}T ^{*} _{Z _I} X,
\end{equation}
where $T ^{*} _{Z _I} X$
is the standard notation (see \ref{stab-T_X X}).
In particular, with the remark \ref{Def-dev-Lagrangianity} and since we know that $\E$ is holonomic (see \cite{caro-Tsuzuki}), 
this implies that 
$|\mathrm{Car} (\E )|$ is Lagrangian. 
\end{prop}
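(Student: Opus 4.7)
The question being local on $\X$, I may assume $\X$ is affine with local coordinates $t_1,\dots,t_d$ such that $\ZZ_i=V(t_i)$ for $i=1,\dots,r$. The first step is to identify the right-hand side of \ref{lagrangianityLogextend-incl} in these coordinates. Applying the calculation of the preceding Example to each closed smooth subvariety $Z_I=V(t_i:i\in I)$ of $X$, one obtains $T^*_{Z_I}X=V(t_i:i\in I)\cap V(\xi_j:j\notin I)$. Writing any point $(t,\xi)\in T^*X$ and associating to it the subset $I:=\{i\le r:t_i=0\}$, a direct bookkeeping gives
\[
\bigcup_{I\subset\{1,\dots,r\}}T^*_{Z_I}X=V(t_1\xi_1,\dots,t_r\xi_r,\xi_{r+1},\dots,\xi_d)
\]
inside $T^*X$. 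Setting $\sigma_i:=t_i\xi_i$ for $i\le r$ and $\sigma_i:=\xi_i$ for $i>r$, the identification of \ref{comput5.2.2.1Beintro} shows that these $\sigma_i$ are precisely the principal symbols, in $\gr\widehat{\D}^{(0)}_\X$, of the logarithmic derivations $\partial_{\#i}$ of $\widehat{\D}^{(0)}_{\X^\#}$.

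Next, I use Frobenius descent to model $\E$ at level $0$. Descending $\G$, I obtain a coherent $\widehat{\D}^{(0)}_{\X^\#,\Q}$-module $\G^{(0)}$ that remains locally projective of finite type over $\O_{\X,\Q}$. A standard $p$-adic argument (using the boundedness of the log connection matrix of a convergent log isocrystal in local coordinates) produces a $p$-torsion-free $\O_\X$-coherent lattice $\G_0\subset\G^{(0)}$ stable under each $\partial_{\#i}$, hence under $\widehat{\D}^{(0)}_{\X^\#}$. The level-zero representative of $\E$ is then modelled by the coherent $\widehat{\D}^{(0)}_\X(\hdag Z)$-module
\[
\overset{\circ}{\E}:=\widehat{\D}^{(0)}_\X(\hdag Z)\otimes_{\widehat{\D}^{(0)}_{\X^\#}}\G_0
\]
(modulo $p$-torsion), equipped with the good filtration $\overset{\circ}{\E}_n:=\widehat{\D}^{(0)}_{\X,n}(\hdag Z)\cdot\G_0$, so that $|\mathrm{Car}(\E)|=\mathrm{Supp}(\gr(\overset{\circ}{\E}/\pi))\subset T^*X$.

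The crucial step is to show that each $\sigma_i$ acts by zero on $\gr\overset{\circ}{\E}$. By log-stability, $\partial_{\#i}\cdot\G_0\subset\G_0=\overset{\circ}{\E}_0$; for a general element $P\cdot e\in\overset{\circ}{\E}_n$ with $P$ of order $\le n$ and $e\in\G_0$, one expands
\[
\partial_{\#i}(P\cdot e)=[\partial_{\#i},P]\cdot e+P\cdot(\partial_{\#i}\cdot e),
\]
and observes that $[\partial_{\#i},P]\in\widehat{\D}^{(0)}_{\X,n}(\hdag Z)$ (since log derivations commute with the order filtration up to order-preserving commutators) and $\partial_{\#i}\cdot e\in\G_0$, whence the whole expression lies in $\overset{\circ}{\E}_n$. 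By induction, $\partial_{\#i}\cdot\overset{\circ}{\E}_n\subset\overset{\circ}{\E}_n$, which means that in the associated graded the degree-one symbol $\sigma_i$ vanishes. Consequently $\mathrm{Supp}(\gr(\overset{\circ}{\E}/\pi))\subset V(\sigma_1,\dots,\sigma_d)=\bigcup_I T^*_{Z_I}X$, establishing \ref{lagrangianityLogextend-incl}. The Lagrangianity of $|\mathrm{Car}(\E)|$ then follows by combining this inclusion with the holonomicity of $\E$ (\cite{caro-Tsuzuki}), the purity Corollary \ref{coro-puritythm}, and the Lagrangianity of each $T^*_{Z_I}X$ from the preceding Example.

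The main obstacle is the correct construction of the level-zero model $\overset{\circ}{\E}$ together with its good filtration: one must verify that the Berthelot-type localization $\widehat{\D}^{(0)}_\X(\hdag Z)\otimes_{\widehat{\D}^{(0)}_{\X^\#}}\G_0$ really does represent, modulo $p$-torsion, the Frobenius-descended level-zero representative of $\E=(\hdag Z)(\G)$, and that the filtration $\overset{\circ}{\E}_n$ is the one computing the characteristic variety of $\E$. This requires some care about the compatibility between Frobenius descent, the overconvergent localization $(\hdag Z)$, and the passage to the associated graded, but once these compatibilities are in place the geometric conclusion is an immediate consequence of the symbol computation above.
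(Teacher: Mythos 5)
Your identification of the right-hand side with $V(t_1\xi_1,\dots,t_r\xi_r,\xi_{r+1},\dots,\xi_d)$ is correct, and the commutator argument showing that a filtration generated in degree $0$ by a lattice stable under the logarithmic derivations is preserved by the $\partial_{\# i}$ is sound; used at the right level, this direct symbol computation would even let you bypass the induction on $r$ that the paper uses (the paper only extracts the vanishing of $\xi_{r+1},\dots,\xi_d$ from log-stability, and treats the directions $i\le r$ by induction over the cover by the $\X\setminus\ZZ_i$ together with the locus $\cap_i\ZZ_i$).

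The genuine gap is the construction of the model $\overset{\circ}{\E}$, which you yourself flag as ``the main obstacle'' but do not resolve, and which as written does not work. First, $\mathrm{Car}(\E)$ is by definition the support of the graded of a \emph{good} filtration on an integral model of a coherent $\widehat{\D}^{(0)}_{\X,\Q}$-module (the Frobenius descendant of $\E$); your candidate is a module over the localized ring $\widehat{\D}^{(0)}_{\X}(\hdag Z)$, and the filtration $\widehat{\D}^{(0)}_{\X,n}(\hdag Z)\cdot\G_0$ has graded pieces that are only $\O_{\X}(\hdag Z)$-coherent, not $\O_X$-coherent, so its graded is not a coherent $\gr\D^{(0)}_X$-module and its support does not compute the characteristic variety. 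This is precisely why the paper replaces inverting the $t_i$ by twisting the lattice into $\G^{(0)}(\ZZ)$ and forming $\M^{(m)}=\widehat{\D}^{(m)}_{\X}\otimes_{\widehat{\D}^{(m)}_{\X^{\#}}}\H^{(m)}$ over the \emph{non-localized} ring, with $\H^{(m)}$ genuinely $\O_{\X}$-coherent. Second, the identification of such a log-to-non-log induction with $\E=(\hdag Z)(\G)$ is the deep input of the proof: it is the isomorphism $\D^{\dag}_{\X,\Q}\otimes_{\D^{\dag}_{\X^{\#},\Q}}\G(\ZZ)\cong\E$ of \cite[5.24.(ii)]{caro_log-iso-hol} and \cite[2.2.9]{caro-Tsuzuki}, which is exactly where the Frobenius hypothesis enters (it controls the exponents), and it only holds at level $\dag$. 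One must then invoke \cite[3.6.2.(i)]{Be1} to descend to a coherent model at some finite level $m\gg 0$ --- not level $0$ as you propose --- after which the relevant log symbol is that of $\partial_{\# i}^{<p^m>_{(m)}}$, namely $t_i^{p^m}\xi_i^{(m)}$ (same vanishing locus, so your geometric conclusion survives, but the computation must be carried out at level $m$). Until the existence of a coherent level-$m$ model over $\widehat{\D}^{(m)}_{\X}$ with an $\O_{\X}$-coherent, $\widehat{\D}^{(m)}_{\X^{\#}}$-stable degree-zero piece computing $\mathrm{Car}(\E)$ is actually established, the proof is not complete.
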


\begin{proof}
This is local so we can suppose $\X$ affine with local coordinates
$t _{1}, \dots , t _d$ such that $\ZZ _i = V (t _i)$ for $i=1,\dots, r$. 
We proceed by induction on $r$. 
For $r =0$ (i.e. $\ZZ$ is empty), this is already known (see the example after \cite[5.2.7]{Beintro2}). Suppose now $r \geq 1$.
From \cite[4.12]{caro_log-iso-hol}, there exists 
a coherent $\widehat{\D} ^{(0)} _{\X ^{\#}}$-module 
$\G ^{(0)}$ which is also $\O _{\X}$-coherent and such that 
$\G ^{(0)} _\Q \riso \G$. 
We can suppose that $\G ^{(0)}$ has no $p$-torsion (indeed, from \cite[3.4.4]{Be1}, the subsheaf of $\G ^{(0)}$ of $p$-torsion elements is 
a coherent $\widehat{\D} ^{(0)} _{\X ^{\#}}$-module and also a coherent $\O _{\X}$-module).
With the notation 
\cite[5.1]{caro_log-iso-hol}, 
$\G (\ZZ)$ is also a coherent $\D ^{\dag} _{\X ^\#,\Q}$-module and 
a locally projective $\O _{\X,\Q}$-module of finite type.
Hence, from \cite[4.14]{caro_log-iso-hol}, we get 
$\widehat{\D} ^{(m)} _{\X ^{\#},\Q} \otimes _{\widehat{\D} ^{(0)} _{\X ^{\#},\Q}} \G  (\ZZ)
\riso \G  (\ZZ)$.
We denote by 
$\H ^{(m)}$
the quotient of 
$\widehat{\D} ^{(m)} _{\X ^{\#}} \otimes _{\widehat{\D} ^{(0)} _{\X ^{\#}}} (\G ^{(0)} (\ZZ))$
by its $p$-torsion part.
The latter isomorphism implies that
$\H ^{(m)} _\Q \riso \G  (\ZZ)$.
By using \cite[3.4.5]{Be1}
and \cite[4.12]{caro_log-iso-hol}, it follows that
$\H ^{(m)}$ is isogeneous to 
a coherent $\widehat{\D} ^{(m)} _{\X ^{\#}}$-module 
which is also $\O _{\X}$-coherent.
Since $\Gamma (\X, \O _{\X})$ is noetherian and 
$\H ^{(m)}$ has no $p$-torsion, 
we get that $\Gamma ( \X, \H ^{(m)})$ is a $\Gamma (\X, \O _{\X})$-module of finite type. 
Since $\H ^{(m)}$ is a coherent $\widehat{\D} ^{(m)} _{\X ^{\#}}$-module, 
this yields that $\H ^{(m)}$ is also $\O _{\X}$-coherent
(this is a log-variation of \cite[2.2.13]{caro_courbe-nouveau} and its check is identical).

\bigskip

We denote by $\M ^{(m)}:= \widehat{\D} ^{(m)} _{\X} \otimes _{\widehat{\D} ^{(m)} _{\X ^{\#}}} \H ^{(m)}$
and by $\NN ^{(m)}$ the quotient of   
$\M ^{(m)}$
by its $p$-torsion part.
We put 
$\overline{\H} ^{(m)} : =\H ^{(m)}/ \pi \H ^{(m)}$,
$\overline{\M} ^{(m)}: =\M ^{(m)}/ \pi \M ^{(m)}$
and $\overline{\NN} ^{(m)}: =\NN ^{(m)}/\pi \NN ^{(m)}$.	
Since we have the epimorphism
$\overline{\M} ^{(m)} \twoheadrightarrow \overline{\NN} ^{(m)}$, 
from \cite[5.2.4.(i)]{Beintro2} and its notation, we get 
$\mathrm{Car} ^{(m)} (\overline{\NN} ^{(m)}) \subset \mathrm{Car} ^{(m)} (\overline{\M} ^{(m)})$.
From \cite[4.14]{caro_log-iso-hol}, we get the second isomorphism
$\D ^{\dag} _{\X ,\Q}  \otimes _{\widehat{\D} ^{(m)} _{\X ,\Q}} \NN ^{(m)} _\Q
\riso 
\D ^{\dag} _{\X ,\Q} \otimes _{\widehat{\D} ^{(m)} _{\X ^\#,\Q}} \G (\ZZ)
\riso 
\D ^{\dag} _{\X ,\Q} \otimes _{\D ^{\dag} _{\X ^\#,\Q}} \G (\ZZ)$.
Since $\G$ has a Frobenius structure, it follows from 
\cite[5.24.(ii)]{caro_log-iso-hol}
and
\cite[2.2.9]{caro-Tsuzuki}
that we have the isomorphism 
$\D ^{\dag} _{\X ,\Q} \otimes _{\D ^{\dag} _{\X ^\#,\Q}} \G (\ZZ)
\riso \E$. 
Hence, 
$\D ^{\dag} _{\X ,\Q}  \otimes _{\widehat{\D} ^{(m)} _{\X ,\Q}} \NN ^{(m)} _\Q
\riso \E$. 
By definition of 
$\mathrm{Car} (\E)$
(see \cite[5.2.7]{Beintro2}), 
by using \cite[3.6.2.(i)]{Be1} this implies that 
for $m$ large enough we have the equality
$\mathrm{Car} (\E)= \mathrm{Car} ^{(m)} (\overline{\NN} ^{(m)}) $
(modulo the homeomorphism
between $T ^{*(m)} X$ and $T ^{*} X$ of \cite[5.2.2.1]{Beintro2}).

Let $\overline{\M} ^{(m)} _n $ be the image of 
$\D ^{(m)} _{X, n} \times \overline{\H} ^{(m)}\to \overline{\M} ^{(m)} $.
Since $\overline{\H} ^{(m)}$ is 
$\O _{X}$-coherent, 
we check that $\overline{\M} ^{(m)} _n $ is a good filtration of $\overline{\M} ^{(m)}$ (see \cite[5.2.3]{Beintro2}). 
By definition, this implies
$\mathrm{Car} ^{(m)} (\overline{\M} ^{(m)})
=
\mathrm{Supp} (\oplus _{n\in \N}
(\overline{\M} ^{(m)} _n  / \overline{\M} ^{(m)} _{n-1}))$. 
For $d \geq i >r$, we remark that
$\partial _i ^{< p ^{m}> _{(m)}} \in \D ^{(m)} _{X ^\#}$.
Hence, we get in $T ^{*(m)} X = \Spec (\gr \, \D ^{(m)} _{X/S} ) _{\mathrm{red}}$ the inclusion 
$\mathrm{Car} ^{(m)} (\overline{\M} ^{(m)}) \subset \cap _{i=r+1} ^{d} V ( \xi _{i} ^{(m)})$ (see the notation of \ref{comput5.2.2.1Beintro}).
Since,  modulo the homeomorphism between $T ^{*(m)} X$ and $T ^{*} X$ of \cite[5.2.2.1]{Beintro2}, 
the closed variety $\cap _{i=r+1} ^{d} V ( \xi _{i} ^{(m)})$ corresponds to 
$\cap _{i=r+1} ^{d} V ( \xi _{i} )$ (see the description given in \ref{comput5.2.2.1Beintro}
of the homeomorphism \cite[5.2.2.1]{Beintro2}), 
then we get 
$\mathrm{Car}  (\E) \subset \cap _{i=r+1} ^{d} V ( \xi _{i} )$.

Let $\ZZ _{min} := \cap _{i=1} ^{r}\ZZ _i$.
Then, we get $\iota ^{-1} (\mathrm{Car}  (\E))
\subset \iota ^{-1} ( \cap _{i=r+1} ^{d} V ( \xi _{i}))
= T ^{*} _{Z _{min}} X$, 
where 
$\iota \colon Z _{min} \times _{X} T ^{*}X \hookrightarrow T ^{*}X$ is the canonical immersion.

For any $i = 1,\dots, r$, we put 
$\X  _i := \X \setminus  \ZZ _i$.
From the induction hypothesis, 
we get the first inclusion
$\mathrm{Car}  (\E |\X _i) 
\subset 
\cup _{l _i \in I _i}T ^{*} _{X _i \cap Z _{I _i}} X _i
=
X _i \times _{X}\cup _{l _i \in I _i}T ^{*} _{Z _{I _i}} X$,
where the union runs through
subsets $I _i$ of $\{ 1, \dots , r\}$ which do not contain $i$.
Since $(\X _i) $ is a open covering of $\X \setminus Z _{min}$ and 
since 
$\iota ^{-1} (\mathrm{Car}  (\E))
\subset T ^{*} _{\ZZ _{min}} X$, we conclude. 

\end{proof}

\begin{rem}
Let $\E ' \to \E \to \E '' \to \E '[1]$ be an exact triangle of $F \text{-}D ^{\mathrm{b}} _{\mathrm{coh}} (\D ^{\dag} _{\X,\Q})$.
Using \cite[5.2.4.(i) and 5.2.7]{Beintro2}, 
we get the equality 
$\mathrm{Car} (\E ) = \mathrm{Car} (\E' ) \cup \mathrm{Car} (\E'')$.
This yields that  $\E'$ and $\E''$ are Lagrangian if and only if $\E$ is Lagrangian.
Hence, to check the Lagrangianity of overholonomic $F$-complexes, we reduce by devissage to the case of overcoherent $F$-isocrystals (see \cite{caro_devissge_surcoh}).
But, this is probably wrong that any overcoherent $F$-isocrystals are Lagrangian. 
\end{rem}

\small
\bibliographystyle{alpha}
\def\cprime{$'$}
\providecommand{\bysame}{\leavevmode ---\ }
\providecommand{\og}{``}
\providecommand{\fg}{''}
\providecommand{\smfandname}{et}
\providecommand{\smfedsname}{\'eds.}
\providecommand{\smfedname}{\'ed.}
\providecommand{\smfmastersthesisname}{M\'emoire}
\providecommand{\smfphdthesisname}{Th\`ese}

\bigskip
\noindent Daniel Caro\\
Laboratoire de Mathématiques Nicolas Oresme\\
Université de Caen
Campus 2\\
14032 Caen Cedex\\
France.\\
email: daniel.caro@unicaen.fr

\end{document}